\newcommand{\cM}{\mathcal M}
\newcommand{\cK}{\mathcal K}
\newcommand{\bO}{\mathbb O} 
\newcommand{\str}{{\mathfrak{str}}}
\newcommand{\AU}{\mathop{{\rm AU}}\nolimits}
\newcommand{\Bij}{\mathop{{\rm Bij}}\nolimits}
\newcommand{\Mod}{\mathop{{\rm Mod}}\nolimits}
\newcommand{\Cau}{\mathop{{\rm Cau}}\nolimits}
\newcommand{\Conf}{\mathop{{\rm Conf}}\nolimits}
\newcommand{\Conj}{\mathop{{\rm Conj}}\nolimits}
\newcommand{\Inv}{\mathop{{\rm Inv}}\nolimits}
\renewcommand{\phi}{\varphi}
\newcommand{\Stand}{\mathop{{\rm Stand}}\nolimits}
\renewcommand\mlabel{\label}
\newtheorem{theorem}{Theorem}[section]
\newtheorem{lemma}[theorem]{Lemma}
\newtheorem{proposition}[theorem]{Proposition} 
\newtheorem{cor}[theorem]{Corollary} 
\theoremstyle{definition}
\newtheorem{definition}[theorem]{Definition}
\newtheorem{example}[theorem]{Example}
\theoremstyle{remark}
\newtheorem{remark}[theorem]{Remark}
\newtheorem{prob}[theorem]{Problem}
\numberwithin{equation}{section}
\begin{document}

\title{On the geometry of standard subspaces}


\author[Neeb]{Karl-Hermann Neeb}
\address{Department Mathematik, Universit\"at Erlangen-N\"urnberg, 
Cauerstrasse 11, 91058 Erlangen, Germany}
\email{neeb@math.fau.de}

\thanks{K.-H.~Neeb acknowledges supported by DFG-grant NE 413/9-1.}

\subjclass[2010]{Primary 22E45; Secondary 81R05, 81T05.}

\date{July 9, 2017}

\begin{abstract}
A closed real subspace $V$ of a complex Hilbert space $\cH$ 
is called standard if $V \cap i V = \{0\}$ and $V + i V$ is dense in $\cH$. 
In this note we study several aspects of the geometry of the space 
$\Stand(\cH)$ of standard subspaces. In particular, we show that 
modular conjugations define the structure of a reflection space and that 
the modular automorphism groups extend this to the structure of a dilation space. 
Every antiunitary representation of a graded 
Lie group $G$ leads to a morphism of dilation spaces 
$\Hom_{\rm gr}(\R^\times,G) \to \Stand(\cH)$. Here dilation invariant 
geodesics (with respect to the reflection space structure) 
correspond to antiunitary representations $U$ of $\Aff(\R)$ 
and they are decreasing if and only if $U$ is a positive energy representation. 
We also show that the ordered symmetric spaces corresponding to 
euclidean Jordan algebras have natural order 
embeddings into 
$\Stand(\cH)$ obtained from any antiunitary positive energy 
representations of the conformal group. 
\end{abstract}

\maketitle
\tableofcontents

\section*{Introduction} 

A closed real subspace $V$ of a complex Hilbert space $\cH$ 
is called {\it standard} 
if $V \cap i V = \{0\}$ and $V + i V$ is dense in $\cH$ 
(\cite{Lo08}). We write $\Stand(\cH)$ for the set of standard subspaces of~$\cH$.
The main goal of this note is to shed some light on the geometric structure 
of this space and how it can be related to 
geometric structures on manifolds on which Lie groups $G$ act via 
antiunitary representations on~$\cH$. 

Standard subspaces arise naturally in the modular theory of 
von Neumann algebras. If $\cM \subeq B(\cH)$ is a von Neumann algebra 
and $\xi \in \cH$ is a cyclic separating vector for $\cM$, 
i.e., $\cM\xi$ is dense in $\cH$ and the map 
$\cM \to \cH, M\mapsto M\xi$ is injective, then 
\[ V_\cM := \oline{ \{ M\xi \: M^* = M, M \in \cM \}} \] 
is a standard subspace of $\cH$. Conversely, 
one can associate to every standard subspace $V \subeq \cH$ 
in a natural way a von Neumann algebra in 
the bosonic and fermionic Fock space of $\cH$, and this 
assignment has many nice properties 
(see \cite[\S\S 4,6]{NO17} and \cite{Lo08} for details). This establishes a direct 
connection between standard subspaces and pairs 
$(\cM,\xi)$ of von Neumann algebras with cyclic separating vectors. 
Since the latter objects play a key role in 
Algebraic Quantum Field Theory in the context of Haag--Kastler nets 
(\cite{Ar99, Ha96, BW92}), it is important 
to understand the geometric structure of the space~$\Stand(\cH)$. 
Here a key point is that it reflects many important properties 
of von Neumann algebras related to modular inclusions and 
symmetry groups quite faithfully in a much simpler environment 
(\cite[\S 4.2]{NO17}). 
We refer to \cite{Lo08} for an excellent survey on this correspondence. 
In QFT, standard subspaces provide 
the basis for the technique of modular localization, developed 
by Brunetti, Guido and Longo in \cite{BGL02}. 

Every standard subspace $V$ determines by the polar decomposition 
of the closed operator $S$, defined on $V + i V$ by 
$S(x + i y) = x- iy$, a pair $(\Delta_V, J_V)$ of so-called modular objects, 
i.e., $\Delta_V$ is a positive selfadjoint operator and 
$J_V$ is a conjugation (an antiunitary involution) satisfying 
$J_V \Delta_V J_V = \Delta_V^{-1}$. 
This correspondence leads to a bijection 
\[ \Psi \:  \Mod(\cH) \to \Stand(\cH), \quad 
(\Delta, J) \mapsto  \Fix(J\Delta^{1/2}) \] 
between the set $\Mod(\cH)$ of pairs of modular objects 
and $\Stand(\cH)$. 

There actually is a third model of $\Stand(\cH)$ that 
comes from the fact that each pair $(\Delta, J)$ defines a homomorphism 
\[ \gamma \: \R^\times \to \AU(\cH)\quad \mbox{ by } \quad 
\gamma(e^t) := \Delta^{-it/2\pi}, \quad 
\gamma(-1) := J, \] 
where $\AU(\cH)$ denotes the group of unitary and antiunitary operators on $\cH$.
This perspective will play a crucial role in our analysis of $\Stand(\cH)$. 

In Section~\ref{sec:1} we discuss Loos' concept of a reflection 
space, which is a generalization of the 
concept of a symmetric space. Although symmetric spaces play a central 
role in differential geometry and harmonic analysis 
for more than a century, reflection spaces never received much 
attention. As we shall see below, they provide exactly the right 
framework to study the geometry of $\Stand(\cH)$. 
Reflection spaces are specified in terms of a system 
$(s_x)_{x \in M}$ of involutions satisfying 
\[ s_x(x) = x \quad \mbox{ and } \quad s_x s_y s_x = s_{s_xy}
\quad \mbox{ for } \quad x,y \in M.\] 
One sometimes has even more 
structure encoded in a family $(r_x)_{r \in \R^\times, x \in M}$ 
of $\R^\times$-actions on $M$ satisfying 
\[ r_x(x) = x, \quad r_x s_x = (rs)_x \quad \mbox{ and } \quad 
r_x s_y r_x^{-1} = s_{r_xy}\quad \mbox{  for } \quad x,y \in M, \ r,s \in \R^\times.\]
 This defines the structure 
of a {\it dilation space}, a concept studied in the more general 
context of $\Sigma$-spaces by Loos in \cite{Lo72}. 
For $r = -1$, we obtain a reflection space, so that 
a dilation space is a reflection space with additional structure. 
Other important classes of dilation spaces 
are the ruled spaces discussed in \cite[Ch.~VI]{Be00} 
that arise naturally in Jordan theory. 

In Section~\ref{sec:2} we turn to the space $\Stand(\cH)$ 
of standard subspaces and show that 
it carries a natural dilation space structure. 
This corresponds naturally to dilation space structures on the sets 
$\Mod(\cH)$ and $\Hom(\R^\times, \AU(\cH))$. 
The underlying reflection space structure on $\Stand(\cH)$ is given by 
\[ V_1 \bullet V_2 := s_{V_1} V_2 = J_1 J_2 V_2\] 
and the map 
\[ q \: \Stand(\cH) \to \Conj(\cH), \quad V \mapsto J_V \]  
onto the symmetric space $\Conj(\cH)$ of antiunitary involutions on $\cH$ 
is a morphism of reflection spaces. Here an interesting point is 
that $\Conj(\cH)$ does not carry a non-trivial dilation space structure, 
so that the weaker notion of a reflection space on $\Stand(\cH)$ 
actually leads to the much richer dilation space structure. 
If $(G,\eps_G)$ is a graded topological group, i.e., 
$\eps_G \to \{\pm1\}$ is a continuous homomorphism, 
then, for every antiunitary representation $U \:  G  \to \AU(\cH)$, 
the natural map $U_* \: \Hom_{\rm gr}(\R^\times,G) \to 
\Hom_{\rm gr}(\R^\times, \AU(\cH))$ defines a morphism of 
dilation spaces $\cV_U \: \Hom_{\rm gr}(\R^\times,G) \to \Stand(\cH)$ 
which is the Brunetti--Guido--Longo (BGL) 
map $\cV_U$ from \cite[Prop.~5.6]{NO17} and \cite[Thm.~2.5]{BGL02}. 

A morphism of reflection spaces $\gamma \: \R \to \Stand(\cH)$ 
is called a geodesic. In Proposition~\ref{prop:2.9} 
we describe the geodesics for which $q \circ \gamma$ is continuous 
in terms of unitary one-parameter groups $(U_t)_{t \in \R}$. They are of the form 
\[ \gamma(t) = U_t V, \quad \mbox{ where } \quad 
J_V U_t J_V = U_{-t} \quad \mbox{ for } \quad t \in \R.\] 
On the other hand, we have for each $V \in \Stand(\cH)$ 
the corresponding dilation group implemented 
by the unitary operators $(\Delta_V^{it})_{t \in \R}$. 
Both structures interact nicely for geodesics invariant under the 
dilation group. In Proposition~\ref{prop:2.12} we show that, if $(U_t)_{t \in \R}$ 
does not commute with the dilations $(\Delta_V^{is})_{s \in \R}$, 
the geodesic is an orbit of 
$\Aff(\R)_0$ in $\Stand(\cH)$, where the action is given by an antiunitary 
representation. 

A particularly intriguing structure on $\Stand(\cH)$ 
is the order structure defined by set inclusion to which we turn 
in Section~\ref{sec:3}. This structure is trivial if $\cH$ is finite dimensional 
and it is also trivial on the subspace 
\[ \Stand_0(\cH) := \{ V \in \Stand(\cH) \: V + i V = \cH\}.\] 
But if $\cH$ is infinite dimensional non-trivial 
inclusions can be obtained from antiunitary positive energy 
representations of $\Aff(\R)$, which actually lead to monotone 
dilation invariant geodesics (Theorem~\ref{thm:3.1}). 
This is a direct consequence of the 
Theorems of Borchers and Wiesbrock (cf.~\cite{Lo08}, \cite{NO17}) 
and the dilation space structure thus provides a new geometric perspective 
on these results that were originally formulated in terms of inclusions 
of von Neumann algebras (\cite{Bo92}, \cite{Wi93}). 
In view of this characterization of the monotone dilation invariant geodesics,   
it is an interesting open 
problem to characterize all monotone geodesics in $\Stand(\cH)$. 
To get some more information on the ordered space $\Stand(\cH)$, one natural 
strategy is to consider finite dimensional submanifolds, resp., 
orbits $\cO_V := U_{G_1}.V\cong G_1/G_{1,V}$, where $U$ is an antiunitary representation. 
Then 
\begin{equation}
  \label{eq:semigroup}
S_V := \{ g \in G_1 \: U_gV \subeq V \} 
\end{equation} 
is a closed subsemigroup of $G_1$ with $G_{1,V} = S_V \cap S_V^{-1}$ 
and $S_V$ determines an order structure on $G_1/G_{1,V}$ by 
$g G_{1,V} \leq g' G_{1,V}$ if $g \in g'S_V$ for which the inclusion 
$G_1/G_{1,V} \into \Stand(\cH)$ is an equivariant order embedding. Of course, 
the most natural cases arise if 
$V$ corresponds to some $\gamma \in \Hom_{\rm gr}(\R^\times,G)$ 
under the BGL construction and then $G_{1,\gamma} \subeq G_{1,V}$, so that 
$\cO_V$ is a $G_1$-equivariant quotient of $G_1/G_{1,\gamma} \cong G_1.\gamma \subeq 
\Hom_{\rm gr}(\R^\times,G)$. We conclude this note by showing that, 
if $G$ is the conformal group of a euclidean Jordan algebra $E$ 
and $\gamma \:\R^\times \to G$ corresponds to scalar multiplication 
on $E$, the ordered homogeneous spaces $U_{G_1}V$, 
$V := \cV_U(\gamma)$, obtained from an antiunitary positive energy 
representations $(U,\cH)$ of~$G$, are mutually isomorphic 
and the order structure can be described 
by showing that the semigroup $S_V$ coincides with the well-known 
Olshanski semigroup $S_{E_+}$ of conformal compressions of the open positive 
cone $E_+$ (\cite{HN93}, \cite{Ko95}). This result is based on the 
maximality of the subsemigroup $S_{E_+}$ in $G_1$ which is 
proved in an appendix. 

{\bf Acknowledgment:} We are most grateful to Wolfgang Bertram 
for illuminating discussions on the subject matter of this note 
and for pointing out several crucial references, 
such as \cite{Lo67}. We also thank Jan M\"ollers for suggestions 
to improve earlier drafts of the manuscript. 

\section{Reflection spaces}
\mlabel{sec:1}

In this first section we first review some generalities on reflection 
spaces (\cite{Lo67}) and introduce the notion of a dilation space 
by specialization of Loos' more general concept of a 
$\Sigma$-space (\cite{Lo72}). A key feature of these abstract 
concepts is that they work well in many categories, in particular 
in the category of sets and the category of topological spaces and 
not only in the category of smooth manifolds. 
Only when it comes the finer geometric points related to the 
concept of a symmetric space, a smooth structure is required. 
In Section~\ref{sec:2} this will be crucial for the space 
$\Stand(\cH)$ which carries no natural smooth structure 
but which is fibered over the topological space $\Conj(\cH)$, 
endowed with the strong operator topology.

\begin{definition} (a) Let $M$ be a set and 
\[ \mu \: M \times M \to M, \quad (x,y) \mapsto x \bullet y =: s_x(y) \] 
be a map with the following properties: 
\begin{itemize}
\item[\rm(S1)] $x \bullet x=x$ for all $x \in M$, i.e., $s_x(x) = x$.
\item[\rm(S2)] $x \bullet (x \bullet y) =y$ for all $x,y \in M$, i.e., $s_x^2 = \id_M$. 
\item[\rm(S3)] $s_x(y \bullet z) = s_x(y)\bullet s_x(z)$ for all $x,y \in M$, 
i.e., $s_x \in \Aut(M,\bullet)$. 
\end{itemize}
Then we call $(M,\mu)$ a {\it reflection space} (\cite{Lo67, Lo67b}). 

(b) If $M$ be a smooth manifold and $\mu \: M \times M \to M$ is a smooth map 
turning $(M,\mu)$ into a reflection space, then it is called a 
smooth reflection space. If, in addition, 
each $x$ is an isolated fixed point of $s_x$, 
then it is called a {\it symmetric space} (in the sense of Loos). 

If $M$ is a topological space and $\mu$ is continuous, we call it a 
{\it topological reflection space}. 

(c) If $(M,\bullet)$ and $(N,\bullet)$ are reflection spaces, then a map 
$f \: M \to N$ is called a {\it morphism of reflection spaces} if 
\[ f(m \bullet m') = f(m) \bullet f(m') \quad \mbox{ for } \quad 
m,m' \in M.\]
\end{definition}



\begin{example} \mlabel{ex:1.3}
(a) Any group $G$ is a reflection space 
with respect to the product
\begin{equation}
  \label{eq:e1}
g \bullet h := s_g(h) := gh^{-1}g.
\end{equation}

Note that left and right translations 
\[ \lambda_g(x) = gx \quad \mbox{ and }\quad 
\rho_g(x) = xg \] 
are automorphisms of the reflection space $(G,\bullet)$. 

The subset $\Inv(G)$ of involutions in $G$ is a reflection subspace 
on which the product takes the form $s_g(h) := ghg = ghg^{-1}$. 

(b) Suppose that $G$ is a group and $\tau \in \Aut(G)$ is an involution. 
For any subgroup $H\subeq G^\tau := \Fix(\tau)$, we obtain on the coset space 
$M := G/H$ the structure of a reflection space by 
\begin{equation}
  \label{eq:e2}
xH \bullet y H := x\tau(x)^{-1} \tau(y) H.
\end{equation}
For this reflection space structure all left translations 
$\mu_g \: G/H \to G/H, xH \mapsto gxH$ are automorphisms. 

If, in addition, $G$ is a Banach--Lie group and $H$ is a complemented 
Lie subgroup, then $G/H$ is a smooth reflection space. It is a symmetric space 
if and only if $H$ is an open subgroup of the group $G^\tau$. 
In fact, if $e_M := eH$ is the base point of $G/H$, then 
$T_{e_M}(G/H) \cong \g/\fh$ and the tangent map of $s_{e_M}(gH) = \tau(g)H$ 
is the involution on $\g/\fh$ induced by $T_e(\tau)$. This equals 
$-\id_{\g/\fh}$ if and only if $\fh = \g^\tau$, which is equivalent to 
$H$ being an open subgroup of $G^\tau$. 
For any open subgroup $H\subeq G^\tau = \Fix(\tau)$, we thus obtain 
by \eqref{eq:e2} on $G/H$ the structure of a symmetric space. 

Note that $H = \{e\}$ is also allowed, showing that 
\begin{equation}
  \label{eq:e4}
x \bullet y  := x\tau(x)^{-1} \tau(y)
\end{equation}
also defines on $G$ the structure of a smooth reflection space. 

(c)  Every manifold $M$ is a smooth reflection space with respect to 
$x \bullet y := y$ for $x,y \in M$. 

(d) If $(M,\bullet)$ and $(N, \bullet)$ are smooth reflection spaces, then 
so is their product $M \times N$ with respect to 
\[ (m,n) \bullet (m',n') := (m \bullet m', n \bullet n').\] 

(e) If $(M,\bullet)$ is a reflection space and $q \: M \to N$ is a surjective submersion 
whose kernel relation is a congruence relation with respect to 
$\bullet$, i.e., $q(x) = q(x')$ and $q(y) = q(y')$ implies 
$q(x \bullet y) = q(x' \bullet y')$, then 
\[ q(x) \bullet q(y) := q(x \bullet y) \] 
defines on $N$ the structure of a reflection space. 

In fact, that the product on $N$ is well-defined is our assumption. 
That it is smooth follows from the smoothness of the map 
$M \times M \to M/N, (x,y) \mapsto q(x \bullet y)$ and the fact that 
$q \times q \:  M \times M \to N \times N$ is a submersion. 
Now the relations (S1-3) for $N$ follow immediately from the corresponding 
relations on $M$. 

(f) In addition to (b), we consider a smooth action $\alpha \: H \to \Diff(F)$ 
of $H$ on the manifold $F$ and consider the space 
\[ M := G \times_H F = (G \times F)/H, \] 
where $H$ acts on $G \times F$ by $h.(g,f) := (gh^{-1}, \alpha_h(f))$. 
We write $[g,f]$ for the $H$-orbit of $(g,f)$ in $M$. 
We claim that 
\begin{equation}
  \label{eq:mult1}
[g_1,f_1] \bullet [g_2, f_2] := [g_1 \tau(g_1)^{-1} \tau(g_2), f_2] 
\end{equation}
defines on $M$ the structure of a smooth reflection space on which 
$G$ acts by automorphisms via 
$\mu_g[g',f] = [gg', f]$. 

That \eqref{eq:mult1} is a well-defined smooth binary operation is clear. 
That we obtain a reflection space is most naturally derived from (b), (c), (d) and (e). 
First we note that the product manifold $G \times F$ carries a natural reflection space 
structure given by 
\[ (gH, f) \bullet (g'H, f') := (g\tau(g)^{-1} \tau(g'), f'), \] 
where we use the reflection space structure from (a) on $G$, and the trivial 
one from (c) on~$F$. Next we note that the quotient map 
$q(g,f) := [g,f]$ is a submersion and that, for 
$g,g' \in G, h,h' \in H$, and $f,f' \in F$, the image 
\[ q((g,f)\bullet (g',f')) = [g\tau(g)^{-1}\tau(g'), f'] \] 
of the product in $G/H \times F$ does not change on the $H$-orbits: 
\[ q((gh^{-1},\alpha_h(f))\bullet (g'h',\alpha_{h'}(f'))) 
= [g\tau(g)^{-1}\tau(g')(h')^{-1}, \alpha_{h'}f'] 
= [g\tau(g)^{-1}\tau(g'), f'].\] 
Therefore our claim follows from (e). 

One of the main results of \cite{Lo67, Lo67b} asserts that every 
finite dimensional connected reflection space $(M,\bullet)$ 
is of this form, where 
\begin{itemize}
\item[$\bullet$] $G := \la s_x s_y \: x,y \in M\ra_{\rm grp} \subeq \Diff(M)$ 
carries a finite dimensional Lie group structure. 
\item[$\bullet$] $H = G^\tau$ for $\tau(g) = s_e g s_e$, where $e \in M$ is a 
base point. 
\item[$\bullet$] $F := \{ m \in M \: s_m = s_{e}\}$. 
\end{itemize}

Typical examples with discrete spaces $F$ arise for 
$F := \pi_0(H)$ on which $H$ acts through the quotient homomorphism 
$H \to \pi_0(H)$ by translations. 

We also note that $G$ acts transitively on $G\times_H F$ if and only if 
$H$ acts transitively on $F$. For any $f \in F$ we then have 
$G \times_H F \cong G/H_f$ as a homogeneous space of $G$. 

(g) If $(V,\beta)$ is a $\K$-vector space ($\ch(\K) \not =2$) 
and $\beta \: V \times V \to \K$ is a symmetric bilinear form, then the 
subset $V^\times := \{ v \in V \: \beta(v,v) \not=0\}$ is a reflection 
space with respect to 
\[ x \bullet y = s_x(y) := -y + 2 \frac{\beta(x,y)}{\beta(x,x)}x.\] 
Note that $s_{\lambda x} = s_x$ for every $\lambda \in \K^\times$ 
and, conversely, that $s_x = s_z$ implies $z \in \K^\times x$ because 
$z \in \ker(s_x + \1) = \K x$. 

For $\K = \R$ and $V$ a locally convex space, we thus obtain 
on $V^\times$ the structure of a real smooth reflection space and each 
level set 
\[ V^\times_m = \{ x \in V \: \beta(x,x) = m \} \] 
becomes a symmetric space. The same holds for the image 
$V^\times/\K^\times$ in the projective space~$\bP(V)$. 
\end{example}

\begin{remark} (S1-3) can also be formulated as conditions on the map \break 
$s_x \: M \to M$, namely that $s_x$ is an involution fixing $x$, and (S3) takes the 
form 
\[ s_{x \bullet y} = s_x s_y s_x = s_x \bullet s_y,\] 
where $\bullet$ on the right hand side refers to the natural reflection space 
structure on the set $\Inv(\Bij(M))$ of involutions in the group $\Bij(M)$ of 
permutations of $M$ (Example~\ref{ex:1.3}(a)). 
\end{remark}

\subsection{Powers and geodesics in reflection spaces}

\begin{definition} \mlabel{def:quad} (Quadratic representation and powers) 
Let $(M,\bullet)$ be a reflection space and $e \in M$ be a base point. 
The map 
\[ P = P_ e \:  M \to \Bij(M), \quad 
P_e(m) := s_m s_e \] 
is called the {\it quadratic representation of $M$ with respect to $e$}. 
For $x \in M$, we define the {\it powers with respect to $e$} by 
$x^0 := e$, $x^1 := x$ and 
\[  x^{n+2} := P(x)x^n = x\bullet (e \bullet x)
\quad \mbox { for } \quad n \geq 0, 
\quad \mbox{ and } \quad 
x^n := x^{-n} \quad \mbox{ for } \quad n \in -\N.\] 
\end{definition} 
An easy induction then shows that 
  \begin{equation}
    \label{eq:pow1}
x^n \bullet x^m = x^{2n-m} \quad \mbox{ for } \quad n,m \in \Z.
\end{equation}
We also note that, if $f \: (M,\bullet) \to (M',\bullet)$ is a morphism 
of reflection spaces and $e' = f(e)$, then 
\begin{equation}
  \label{eq:pow2}
f(x^n) = f(x)^n \quad \mbox{ for } \quad x \in M, n \in\Z.  
\end{equation}
Note that \eqref{eq:pow1} means that the map 
$(\Z, \bullet) \to (M,\bullet), n \mapsto x^n$ 
is a morphism of reflection spaces if $\Z$ carries the canonical reflection 
space structure (Example~\ref{ex:1.3}(a)).

\begin{definition} If $(M,\bullet)$ is a (topological) reflection space, then we 
call a (continuous) morphism $\gamma \: (\R, \bullet) \to (M,\bullet)$ of 
reflection spaces a {\it geodesic}. 
\end{definition}

\begin{theorem}\mlabel{thm:oeh} 
{\rm(Oeh's Theorem, \cite{Oe17})} 
Let $G$ be a topological group. Then the geodesics 
$\gamma \: \R \to G$ with $\gamma(0) = g$ are the curves of the form 
\[  \gamma(t) = \eta(t)g, \] 
where $\eta \in \Hom(\R,G)$ is a continuous one-parameter group. 
The range of $\gamma$ is contained in $\Inv(G)$ if and only if 
\[ g\in \Inv(G) \quad \mbox{ and } \quad 
g \eta(t) g^{-1} = \eta(-t) \quad \mbox{ for }\quad t \in\R.\] 
\end{theorem}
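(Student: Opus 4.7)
A geodesic $\gamma \: \R \to G$ with $\gamma(0) = g$ is, by definition, a continuous morphism of reflection spaces, where $\R$ carries the group-induced structure $s \bullet t = s(-t)s = 2s-t$ and $G$ carries $g \bullet h = gh^{-1}g$ (Example~\ref{ex:1.3}(a)). Written out, the geodesic condition is the functional equation
\[ \gamma(2s-t) = \gamma(s)\gamma(t)^{-1}\gamma(s) \qquad \text{for all } s,t \in \R. \]
Sufficiency is immediate: if $\eta \in \Hom(\R,G)$ is a continuous one-parameter group and $\gamma(t) := \eta(t)g$, then
\[ \gamma(s)\gamma(t)^{-1}\gamma(s) = \eta(s)g\cdot g^{-1}\eta(-t)\cdot \eta(s)g = \eta(2s-t)g = \gamma(2s-t), \]
so $\gamma$ is a geodesic with $\gamma(0)=g$.

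For necessity, I would set $\eta(t) := \gamma(t)g^{-1}$, so that $\eta(0) = e$ and $\eta$ is continuous. Substituting $\gamma(u)=\eta(u)g$ into the geodesic equation yields
\[ \eta(2s-t) = \eta(s)\eta(t)^{-1}\eta(s). \]
Specializing $s=0$ gives $\eta(-t) = \eta(t)^{-1}$, and specializing $t=0$ gives $\eta(2s) = \eta(s)^2$. Substituting $t \mapsto -t$ and using $\eta(-t)=\eta(t)^{-1}$ rewrites the equation as $\eta(2s+t) = \eta(s)\eta(t)\eta(s)$.

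The crux is to upgrade this to a homomorphism. An induction on $n$, with step $\eta((n+2)t) = \eta(2t+nt) = \eta(t)\eta(nt)\eta(t) = \eta(t)^{n+2}$, yields $\eta(nt) = \eta(t)^n$ for all $n \in \Z$ and $t \in \R$. For rationals $u = p/N$ and $v = q/N$ with common denominator, setting $t = 1/N$ gives
\[ \eta(u+v) = \eta((p+q)t) = \eta(t)^{p+q} = \eta(t)^p\eta(t)^q = \eta(u)\eta(v). \]
Density of $\Q$ in $\R$ together with continuity of $\eta$ then extends the homomorphism property to all reals, establishing $\eta \in \Hom(\R, G)$.

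For the second assertion, note that $\gamma(t)^2 = \eta(t)g\eta(t)g$. Applied at $t=0$, the condition $\gamma(\R) \subeq \Inv(G)$ forces $g^2 = e$, i.e.\ $g \in \Inv(G)$. For general $t$, the equation $\eta(t)g\eta(t)g = e$ then rearranges to $g\eta(t)g^{-1} = \eta(t)^{-1} = \eta(-t)$; conversely, these two hypotheses immediately yield $\gamma(t)^2 = \eta(t)\cdot g\eta(t)g^{-1}\cdot g^2 = \eta(t)\eta(t)^{-1} = e$. The main obstacle in the proof is the passage from the single functional equation $\eta(2s-t)=\eta(s)\eta(t)^{-1}\eta(s)$ to the one-parameter group property; the critical step there is the inductive identity $\eta(nt)=\eta(t)^n$, after which rational density and continuity finish the job.
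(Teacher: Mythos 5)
Your proof is correct and follows essentially the same route as the paper: reduce to the base point $e$ via right translation by $g^{-1}$, establish $\eta(nt)=\eta(t)^n$ by induction (the paper packages this as the power identity $f(x^n)=f(x)^n$ for morphisms of reflection spaces together with $t^n=nt$ in $(\R,\bullet,0)$), restrict to $\Q$ via a common denominator, and conclude by continuity. The explicit verification of the second assertion is the same "trivial" computation the paper omits.
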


\begin{proof} Since right multiplication with $g^{-1}$ is an automorphism 
of the reflection space 
$(G,\bullet)$, we may w.l.o.g.\ assume that $g = e$ and show that 
in this case the geodesics are the continuous one-parameter groups. 

Clearly, every one-parameter group $\gamma \: \R \to G$ is also a morphism
of reflection spaces, hence a geodesic. 
Suppose, conversely, that $\gamma$ is a geodesic with $\gamma(0) = e$. 
From \eqref{eq:pow2} and the relation $t^n = nt$ 
in the pointed reflection space $(\R,\bullet,0)$, 
it follows that 
\[ \gamma(nt) = \gamma(t)^n \quad \mbox{ for }\quad t \in  \R, n \in \Z.\] 
It follows in particular, that the restriction of $\gamma$ to any 
cyclic subgroup $\Z t \subeq \R$ is a group homomorphism. 
Applying this to $t = \frac{1}{n}$, $n \in \N$, we see that 
$\gamma\res_{\Q} \: \Q \to G$ is a group homomorphism. 
Now the continuity of $\gamma$ implies that $\gamma$ is a homomorphism. 

The second assertion is trivial. 
\end{proof}

\subsection{Dilation spaces} 

Although the reflection space structure is a key bridge between 
manifold geometry and transformation groups 
(\cite{Lo67}), there are natural situations where one has additional 
structures encoded by a family of actions $\mu_x \: \Sigma \to \Diff(M)$ 
of a given Lie group $\Sigma$ on $M$ such that 
$x$ is fixed under $\mu_x$  and the family $(\mu_x)_{x \in M}$ 
satisfies a certain compatibility condition similar to (S3). 
This leads to 
the notion of a $\Sigma$-space introduced by O.~Loos in (\cite{Lo72}). 
Here we shall need only the special case $\Sigma = \R^\times$, 
so that we shall speak of dilation spaces. Restricting to the 
subgroup $\{\pm 1\} \subeq \R^\times$, we then obtain a reflection space, 
so that dilation spaces are reflection spaces with additional 
point symmetries encoded in $\R^\times$-actions parametrized by the points of~$M$. 

\begin{definition} Let $M$ be a set and suppose we are 
given a map 
\[ \mu \: M \times \R^\times \times M \to M, \quad (x,r,y) 
\mapsto x \bullet_r y  := r_x(y) := \mu_r(x,y) := \mu(x,r,y)  \] 
with the following properties: 
\begin{itemize}
\item[\rm(D1)] $r_x(x) = x$ for every $x \in M$ and $r \in \R^\times$. 
\item[\rm(D2)] $r_x \circ s_x = (rs)_x$ for $x \in M$, $r,s \in \R^\times$.
\item[\rm(D3)] $r_x(y \bullet_s z) = r_x(y)\bullet_s r_x(z)$ for $x,y \in M$, 
$r,s \in \R^\times$, i.e., $r_x \in \Aut(M,\mu)$. 
\end{itemize}
Then we call $(M,\mu)$ a {\it dilation space}. 
\end{definition}

\begin{remark} (a) If $(M,\mu)$ is a dilation space, then 
$(M,\mu_{-1})$ is a reflection space. 

(b) In \cite[Def.~VI.2.1]{Be00} the notion of a 
{\it ruled space} is defined as a smooth 
dilation space $(M,\mu)$ with the additional property that, for 
every $r \in \R^\times$, the tangent map 
$T_x(r_x)$ is diagonalizable with eigenvalues $r$ and $r^{-1}$. 
This ensures that $(M,\mu_{-1})$ is a symmetric space. 
We refer to \cite[Thm.~VI.2.2]{Be00} for a characterization of the ruled 
spaces among symmetric spaces. 
\end{remark}

\begin{example} \mlabel{ex:1.13} 
(a) For every group $G$, the space $M := \Hom(\R^\times, G)$ is a 
dilation space with 
\begin{equation}
  \label{eq:r-dil}
(\gamma \bullet_r \eta)(s) := \gamma(r) \eta(s) \gamma(r)^{-1}
\quad \mbox{ for } \quad \gamma,\eta \in M, r,s \in \R^\times.
\end{equation}
Here (D1/2) are clear. For (D3) we calculate 
\begin{align*}
\big(\gamma \bullet_r (\eta \bullet_s \zeta)\big)(t) 
&= \gamma(r) \eta(s) \zeta(t) \eta(s)^{-1}\gamma(r)^{-1}\\
&= \big(\gamma(r) \eta(s) \gamma(r)^{-1}\big)\big(
\gamma(r)\zeta(t)  \gamma(r)^{-1}\big)\big(\gamma(r)
\eta(s)^{-1}\gamma(r)^{-1}\big)\\
&= [(\gamma\bullet_r \eta) \bullet_s (\gamma \bullet_r \zeta)](t).
\end{align*}

For $r =-1$, \eqref{eq:r-dil} specializes to the reflection space structure given by 
\[ (\gamma \bullet \eta)(s) := \gamma(-1) \eta(s) \gamma(-1)^{-1}.\]

(b) For every $\gamma \in \Hom(\R^\times, G)$, the 
conjugacy orbit $G.\gamma = \{ \gamma^g \: g \in G\}$ with 
$\gamma^g(t) := g\gamma(t)g^{-1}$ is a dilation subspace. 
This follows directly from 
\begin{align*}
(\gamma^{g_1} \bullet_r \gamma^{g_2})(s) 
&= g_1 \gamma(r)g_1^{-1} g_2 \gamma(s) g_2^{-1}  g_1 \gamma(r)^{-1}g_1^{-1} \\
&= (g_1 \gamma(r)g_1^{-1} g_2) \gamma(s) 
(g_1 \gamma(r)g_1^{-1} g_2)^{-1}. 
\end{align*}
Note that $G.\gamma = G_1.\gamma$ follows from the fact that 
$\gamma(-1) \in G_\gamma$. 

(c) If $G$ is a Lie group with Lie algebra $\g$, we represent any smooth 
$\gamma \in \Hom(\R^\times, G)$ 
by the pair $(\gamma'(0),\gamma(-1)) \in \g \times \Inv(G)$. 
The corresponding set of pairs is 
\[ \cG := \{ (x,\sigma) \in \g \times \Inv(G) \: \Ad_\sigma x  = x\}\] 
and the reflection structure takes on this set the form 
\[ (x,\sigma) \bullet (y,\eta) = (\Ad_\sigma y, \sigma \eta\sigma) 
= (\Ad_\sigma y, \sigma\bullet \eta).\]
The additional dilation space structure from (a) is given by 
\[ (x,\sigma) \bullet_{e^t} (y,\eta) = (e^{t \ad x} y, \exp(tx)\eta \exp(-tx)).\] 

For $(x,\sigma) \in \cG$, the $G$-orbit is the set 
\begin{equation}
  \label{eq:ad-orb1}
\cO_{(x,\sigma)} := \{ (\Ad_g x, g\sigma g^{-1}) \: g \in G \} 
\end{equation}
which has an obvious fiber bundle structure 
$G \times_{G^\sigma} \Ad_{G^\sigma} x \to G/G^\sigma$ over the 
symmetric space $G/G^\sigma$. 

In many interesting cases (see f.i.~\cite[\S 4.3]{NO17}), 
we actually have $G_x \subeq G^\sigma$. 
This is in particular the case if $\Ad_\sigma = e^{\pi i\ad x}$ and 
$G = G_0 \{e,\sigma\}$ (\cite{NO17}). Then 
\begin{equation}
  \label{eq:ad-orb2}
\cO_{(x,\sigma)} \cong  \Ad_G x 
\end{equation}
simply is an adjoint orbit in $\g$. 
\end{example}

On homogeneous spaces, dilation space structure can sometimes 
be constructed along the lines of Example~\ref{ex:1.3}(b) 
if one considers subgroups with a central one-parameter group: 

\begin{proposition} \mlabel{prop:1.11}
Let $G$ be a group and 
$\alpha \:  \R^\times \to \Aut(G)$ be a homomorphism. Then 
\[ g \bullet_r h := g \alpha_r(g^{-1}h) \] 
defines on $G$ the structure of a dilation space for which 
left translations $\lambda_g(x) = gx$ act by automorphisms. 
If $H \subeq G$ is a subgroup fixed pointwise by $\alpha$, then 
the space $G/H$ of left $H$-cosets is a dilation space with respect to 
\[ g H \bullet_r uH := g \alpha_r(g^{-1}u)H \quad \mbox{ for } \quad 
g,u \in G. \] 
\end{proposition}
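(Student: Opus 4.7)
The plan is to verify axioms (D1), (D2), (D3) by direct calculation using two facts: that each $\alpha_r$ is a group automorphism of $G$ (so it commutes with inversion and preserves products), and that $\R^\times$ is abelian, which via $\alpha_{rs} = \alpha_r \alpha_s$ yields the crucial identity $\alpha_r \alpha_s = \alpha_s \alpha_r$. After checking the axioms on $G$ itself, I would treat the homogeneous quotient $G/H$ and the assertion about left translations as essentially formal consequences.

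For (D1), a one-line computation gives $r_g(g) = g\alpha_r(g^{-1}g) = g \alpha_r(e) = g$. For (D2), I would unfold $r_g \circ s_g(h) = r_g\bigl(g\alpha_s(g^{-1}h)\bigr) = g\alpha_r\bigl(\alpha_s(g^{-1}h)\bigr) = g\alpha_{rs}(g^{-1}h) = (rs)_g(h)$, using that $\alpha$ is a homomorphism. That left translations $\lambda_a$ are automorphisms is immediate, because the $a$'s cancel in $(ag)^{-1}(ah) = g^{-1}h$, so $\lambda_a(g) \bullet_r \lambda_a(h) = ag\,\alpha_r(g^{-1}h) = \lambda_a(g \bullet_r h)$.

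The main calculation is (D3), which will be the only step with any content. Expanding both sides,
\begin{align*}
r_x(y \bullet_s z)
&= x\,\alpha_r\!\bigl(x^{-1} y\, \alpha_s(y^{-1}z)\bigr),\\
r_x(y) \bullet_s r_x(z)
&= x\alpha_r(x^{-1}y)\cdot\alpha_s\!\bigl(\alpha_r(x^{-1}y)^{-1}\alpha_r(x^{-1}z)\bigr).
\end{align*}
In the second line I would simplify the argument of $\alpha_s$ to $\alpha_r(y^{-1}z)$ using that $\alpha_r$ is a homomorphism, then invoke commutativity $\alpha_s\alpha_r = \alpha_r\alpha_s$ to rewrite $\alpha_s\alpha_r(y^{-1}z) = \alpha_r\alpha_s(y^{-1}z)$, and finally pull $\alpha_r$ out using the homomorphism property to land exactly on the first line.

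For the quotient statement, the content is showing that the operation descends to $G/H$, i.e., is independent of coset representatives. Replacing $g$ by $gh_1$ and $u$ by $uh_2$ with $h_1,h_2 \in H$, I compute
\[ gh_1\,\alpha_r\bigl((gh_1)^{-1}uh_2\bigr)H = gh_1\,\alpha_r(h_1^{-1})\,\alpha_r(g^{-1}u)\,\alpha_r(h_2)\,H.\]
Since $\alpha_r$ fixes $H$ pointwise, $\alpha_r(h_1^{-1}) = h_1^{-1}$ and $\alpha_r(h_2) \in H$, so the right hand side collapses to $g\,\alpha_r(g^{-1}u)H$, proving well-definedness. The dilation axioms then transfer from $G$ to $G/H$ because the quotient map is a surjection intertwining the two operations, analogous to Example 1.3(e). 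The expected obstacle is minor: just bookkeeping in the (D3) calculation, where one must carefully track where $\alpha_r$ and $\alpha_s$ swap, since that is the single non-formal step.
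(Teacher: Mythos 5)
Your proof is correct and follows essentially the same route as the paper: a direct verification of (D1)--(D3), with (D3) resting on the homomorphism property of each $\alpha_r$ together with $\alpha_r\alpha_s=\alpha_s\alpha_r$ (from $\R^\times$ being abelian), and the quotient case deduced by descent. Your explicit check that the operation on $G/H$ is independent of coset representatives is a detail the paper dismisses as obvious, but it is the same argument.
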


\begin{proof}
Here (D1) is trivial and (D2) follows from $r_g = \lambda_g  \alpha_r \lambda_g^{-1}$ 
for $g \in G$. For (D3) we observe that 
\[ g \bullet_r (h \bullet_s u) 
= g \alpha_r\big(g^{-1}h \alpha_s(h^{-1}u)\big) 
= g \alpha_r\big(g^{-1}h) \alpha_{rs}(h^{-1}u)\] 
equals 
\begin{align*}
(g \bullet_r h) \bullet_s (g \bullet_r u) 
&= g \alpha_r(g^{-1}h)  \alpha_s\big(
 \alpha_r(g^{-1}h)^{-1}g^{-1} g \alpha_r(g^{-1}u)\big) \\
&= g \alpha_r(g^{-1}h)  \alpha_s\big( \alpha_r(h^{-1}u)\big)
= g \alpha_r(g^{-1}h)  \alpha_{rs}(h^{-1}u).  
\end{align*}

The second assertion follows immediately from the first one 
because the binary operations $\bullet_r$ are obviously well-defined 
on $G/H$ and (D1-3) for $G/H$ follows immediately from the 
corresponding relations for $G$. 
\end{proof}

By specialization we immediately obtain: 
\begin{example} \mlabel{ex:1.15} 
(a) Every real affine space $A$ is a dilation space with respect to 
\[ a \bullet_r b = a + r(b-a) = (1-r)a + r b 
\quad \mbox{ for }  \quad a,b \in A, r \in \R^\times.\] 

(b) Let $V$ be a vector space and 
$\alpha \: \R^\times \to \GL(V)$ be a group homomorphism. Then 
\[ a \bullet_r b = a + \alpha_r(b-a) \] 
defines on $V$ the structure of a dilation space. 
\end{example}

If $(M,\mu)$ is a dilation space and $e \in M$ is a base point, 
then $\alpha_e(r)(x) := e \bullet_r x$ defines a homomorphism 
$\alpha_e \: \R^\times \to \Aut(M,\bullet_{-1})_e$ 
whose range is central. Conversely, we have: 

\begin{cor} Let $(M,\bullet)$ be a reflection space
and $G \subeq \Aut(M,\bullet)$ be a subgroup 
acting transitively on~$M$. We fix a base point $e \in M$ 
and a homomorphism $\beta \: \R^\times_+ \to G_e$ 
with central range. Then $\bullet_{-1} := \bullet$ together 
with 
\begin{equation}
  \label{eq:hom-dil}
g.e \bullet_r y 
:= g \beta(r)g^{-1}.y \quad \mbox{ for } \quad g \in G, r \in \R^\times_+, y \in M 
\end{equation}
defines on the reflection space $(M,\bullet)$ 
the structure of a dilation space on which $G$ acts by automorphisms. 
\end{cor}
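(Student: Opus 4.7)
The strategy is to deduce the positive-dilation part of the structure from Proposition~\ref{prop:1.11} and then glue it to the given reflection $\bullet_{-1} = \bullet$, with everything hinging on one commutation identity.

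First I would verify that the formula $g.e \bullet_r y := g\beta(r)g^{-1}.y$ is well-defined on $M$ for $r \in \R^\times_+$: if $g_1.e = g_2.e$, then $g_1^{-1}g_2 \in G_e$ and hence commutes with $\beta(r)$ by the centrality hypothesis, so $g_1\beta(r)g_1^{-1} = g_2\beta(r)g_2^{-1}$. Setting $\alpha_r(g) := \beta(r)g\beta(r)^{-1}$ defines a homomorphism $\alpha \: \R^\times_+ \to \Aut(G)$ whose restriction to $G_e$ is the identity, so Proposition~\ref{prop:1.11} endows $G/G_e \cong M$ with a dilation space structure for $r \in \R^\times_+$, and the induced binary operations coincide with the formula above. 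Thus (D1)--(D3) hold whenever the scalars involved are all positive.

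The crucial second observation is that $s_e$ commutes with $\beta(r)$ as maps on $M$: since $\beta(r) \in G \subeq \Aut(M, \bullet)$ fixes $e$, one computes
\[ \beta(r)(s_e(y)) = \beta(r)(e \bullet y) = \beta(r).e \bullet \beta(r).y = e \bullet \beta(r).y = s_e(\beta(r).y). \]
Conjugating by $g \in G$ with $g.e = x$ yields $s_x \circ r_x = r_x \circ s_x$ for all $x \in M$ and $r \in \R^\times_+$. This allows me to define $r_x := |r|_x \circ s_x$ unambiguously for $r < 0$. Axioms (D1) and (D2) then follow by case analysis on the signs of the two scalars: the all-positive case is Proposition~\ref{prop:1.11}, the all-negative case reduces to it after cancelling two reflections, and the mixed cases use the commutation just established.

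For (D3), the case $r > 0, s > 0$ is Proposition~\ref{prop:1.11}; the case $r > 0, s = -1$ holds because $r_x$ lies in $G \subeq \Aut(M, \bullet_{-1})$; the case $r > 0, s < 0$ then follows by combining the previous two together with the commutation; and the cases $r < 0$ are handled via the decomposition $r_x = |r|_x \circ s_x$. Finally, $G$-equivariance is immediate from
\[ g.\bigl((h.e) \bullet_r y\bigr) = g\bigl(h\beta(r)h^{-1}.y\bigr) = (gh)\beta(r)(gh)^{-1}.(g.y) = g(h.e) \bullet_r g.y, \]
and for $r = -1$ it holds because $G \subeq \Aut(M, \bullet)$. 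The main obstacle is verifying (D3) in the case $r < 0, s > 0$, which amounts to showing that each reflection $s_x$ is an automorphism of $\bullet_s$ for $s > 0$; my plan resolves this by reducing to $x = e$ via the $G$-action and then exploiting the base-point commutation $s_e \circ \beta(r) = \beta(r) \circ s_e$ together with the centrality of $\beta(r)$ in $G_e$ to force the required covariance $s_x \circ r_y \circ s_x = r_{s_x(y)}$.
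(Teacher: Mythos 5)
Your argument is correct and rests on the same two pillars as the paper's own proof: Proposition~\ref{prop:1.11} applied to $M \cong G/G_e$, and the commutation $s_e\beta(r) = \beta(r)s_e$, which both proofs derive from the fact that $\beta(r)$ is an automorphism of $(M,\bullet)$ fixing $e$. The only difference is organizational: the paper folds the reflection into the homomorphism by extending $\alpha$ to all of $\R^\times$ via $\alpha_{-1}(g) := s_e g s_e$ and invokes Proposition~\ref{prop:1.11} once, obtaining (D1)--(D3) for all signs simultaneously and then merely checking that the resulting operations agree with $\bullet$ at $r=-1$ and with \eqref{eq:hom-dil} for $r>0$, whereas you apply the proposition only on $\R^\times_+$ and glue in the reflections by a case analysis on signs.
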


\begin{proof} Let $H := G_e$, so that we may identify 
$M$ with $G/H$. Since the elements $\beta(r)$, $r \in \R^\times$, 
fix $e$, they commute with the reflection $s_e$. 
Therefore $\alpha_r(g) := \beta(r)g\beta(r)^{-1}$ for 
$r \in \R^\times_+$ and $\alpha_{-1}(g) := s_e g s_e$ 
define a homomorphism $\alpha \: \R^\times \to \Aut(G)$ 
and $H = G_e$ is fixed pointwise by each~$\alpha_r$.  
We now obtain with Proposition~\ref{prop:1.11} on $G/H$ 
the structure of a dilation space by 
\[ g H \bullet_r uH := g \alpha_r(g^{-1}u)H \quad \mbox{ for } \quad 
g,u \in G. \] 
For $r = -1$, this means that 
\[ g H \bullet_{-1} uH 
= g s_{eH} g^{-1}u s_{eH} H 
= g s_{eH} g^{-1}u H 
= g(e H \bullet g^{-1}uH) 
= g H \bullet uH \] 
recovers the given reflection space structure. 
For $r > 0$ we find 
\[ g H \bullet_r uH 
= g \beta(r) g^{-1}u \beta(r)^{-1}H 
= g \beta(r) g^{-1}u H = (g \beta(r)g^{-1}).uH,\] 
and this coincides with \eqref{eq:hom-dil}. 
\end{proof}

\subsection{Geodesics in dilation spaces} 

Since dilation spaces $(M,\mu)$ are reflection spaces with 
additional structure, geodesics in $(M,\bullet)$ are not always 
compatible with the dilation structure. The following lemma characterizes those 
which are. 

\begin{lemma} If $(M,\mu)$ is a 
dilation space and $\lambda \in \R^\times$, 
then the following are equivalent for a geodesic 
$\gamma \: \R \to (M,\bullet)$: 
\begin{itemize}
\item[\rm(a)] $\gamma$ is a morphism of dilation spaces, 
where the dilation structure on $(\R,\bullet)$ is given by 
$t \bullet_r s := (1-r^\lambda)t + r^\lambda s = t + r^\lambda(s-t)$ for $r > 0$. 
\item[\rm(b)] $\gamma((1-r^\lambda)t + r^\lambda s) 
= \gamma(t) \bullet_r \gamma(s)$ for 
$t,s \in \R, r \in \R^\times_+$. 
\item[\rm(c)] $\gamma(r^\lambda s) = \gamma(0) \bullet_r \gamma(s)$ for 
$s \in \R, r \in \R^\times$. 
\end{itemize}
\end{lemma}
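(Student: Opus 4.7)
The equivalence (a) $\Leftrightarrow$ (b) is essentially a definition unwrap: $\gamma$ is a morphism of dilation spaces iff it preserves $\bullet_r$ for every $r \in \R^\times$, and since $\gamma$ is a geodesic it already preserves $\bullet_{-1}$, so the remaining content is precisely the identity in (b) for $r \in \R^\times_+$. The implication (b) $\Rightarrow$ (c) follows by specialization to $t = 0$.

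The heart of the lemma is (c) $\Rightarrow$ (b). My plan is to transport the statement from the base point $\gamma(0)$ to an arbitrary $\gamma(t)$ using an automorphism of the ambient dilation structure that translates the geodesic by $t$. The candidate is
\[
\phi_t := s_{\gamma(t/2)} \circ s_{\gamma(0)}.
\]
By (D3) applied with $r = -1$, each $s_{\gamma(a)} = (-1)_{\gamma(a)}$ lies in $\Aut(M,\mu)$, and therefore so does $\phi_t$. Using only that $\gamma$ is a reflection space morphism, one has $s_{\gamma(a)}\gamma(b) = \gamma(2a-b)$, and hence $\phi_t(\gamma(s)) = \gamma(t+s)$; in particular $\phi_t(\gamma(0)) = \gamma(t)$ and $\phi_t^{-1}\gamma(s) = \gamma(s-t)$.

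The general fact I will invoke is that every $\phi \in \Aut(M,\mu)$ intertwines the dilations by conjugation, $\phi \circ r_x \circ \phi^{-1} = r_{\phi(x)}$; this is just a rewrite of the automorphism identity $\phi(x \bullet_r y) = \phi(x) \bullet_r \phi(y)$. Specialized to $\phi_t$ this gives $r_{\gamma(t)} = \phi_t \circ r_{\gamma(0)} \circ \phi_t^{-1}$, and evaluating at $\gamma(s)$ together with (c) yields
\[
r_{\gamma(t)}\gamma(s) = \phi_t\bigl(r_{\gamma(0)}\gamma(s-t)\bigr) = \phi_t\bigl(\gamma(r^\lambda(s-t))\bigr) = \gamma\bigl(t + r^\lambda(s-t)\bigr),
\]
which is (b). The only place requiring genuine thought is recognizing that $s_{\gamma(t/2)}\circ s_{\gamma(0)}$ is the right translation automorphism of the entire $\mu$-structure; once that is in place, the rest is bookkeeping.
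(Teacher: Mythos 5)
Your proof is correct and follows essentially the same route as the paper: both establish (c)$\Rightarrow$(b) by using (D3) with $r=-1$ to see that point reflections along $\gamma$ are automorphisms of the whole dilation structure and then transporting the base-point identity (c) along the geodesic. The only cosmetic difference is that you conjugate by the transvection $\phi_t = s_{\gamma(t/2)}\circ s_{\gamma(0)}$, whereas the paper applies the single midpoint reflection $s_{\gamma(t/2)}$ directly to $\gamma(0)\bullet_r\gamma(t-s)$.
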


\begin{proof} The equivalence of (a) and (b) is by definition. 
Further, (c) follows from (b) by specializing to $t = 0$. 
If (c) is satisfied, then we obtain
\begin{align*}
\gamma(t \bullet_r s) 
&
 = \gamma(t + r^\lambda(s-t)) = \gamma\big({\textstyle\frac{t}{2}} \bullet r^\lambda(t-s)\big) \\ 
&= \gamma\big({\textstyle\frac{t}{2}}\big)  \bullet \gamma(r^\lambda(t-s)) 
{\buildrel{(c)}\over{=}} \gamma\big({\textstyle\frac{t}{2}}\big) 
\bullet_{-1} \big(\gamma(0) \bullet_r \gamma(t-s)\big) \\
&= \big(\gamma({\textstyle\frac{t}{2}}) \bullet_{-1} \gamma(0)\big) 
\bullet_r \big(\gamma({\textstyle\frac{t}{2}}) \bullet_{-1} \gamma(t-s)\big) \\
&= \gamma\big({\textstyle\frac{t}{2}} \bullet 0\big) 
\bullet_r \gamma\big({\textstyle\frac{t}{2}} \bullet (t-s)\big) 
= \gamma(t) \bullet_r \gamma(t - (t-s)) 
= \gamma(t) \bullet_r \gamma(s).  
\qedhere\end{align*}
\end{proof}

\begin{remark} The main difference between Examples~\ref{ex:1.15}(a) and (b) 
is that, for $0 \not=v \in V$, the geodesic 
$\gamma(t) = t v$ is a morphism of dilation spaces for the 
$\lambda$-dilation structure on $\R$ if and only if 
\[ r^\lambda tv = \gamma(r^\lambda t) =  0 \bullet_r tv  = \alpha(r)tv 
\quad \mbox{ for } \quad 
r \in \R^\times_+, t \in \R.\] 
This is equivalent to $\alpha(r)v = r^\lambda v$ for all $r \in \R^\times_+$. 
Therefore the geodesics which are morphisms of dilation spaces 
are generated by the elements of the common eigenspace 
\[ V_\lambda := \bigcap_{r \in \R^\times_+} \ker(\alpha(r) - r^\lambda \1).\] 
\end{remark}

\section{The space of standard subspaces} 
\mlabel{sec:2}

We now apply the general discussion of reflection and dilation spaces 
to the space $\Stand(\cH)$ of standard subspaces and its relatives, 
the space $\Mod(\cH)$ of pairs of modular objects $(\Delta, J)$ 
and the space $\Hom_{\rm gr}(\R^\times, \AU(\cH))$ of continuous 
antiunitary representations of $\R^\times$. 

\begin{definition} A closed real subspace $V \subeq \cH$ is called a {\it standard 
subspace} if $V \cap i V = \{0\}$ and $V + i V$ is dense in $\cH$. 
We write $\Stand(\cH)$ for the set of standard subspaces of~$\cH$.
\end{definition}

For every standard subspace $V \subeq \cH$, we obtain an 
antilinear unbounded operator 
\[ S \: \cD(S) := V + i V \to \cH, \qquad 
S(v + i w) := v - i w \] 
with $V = \Fix(S) = \ker(S- \1)$. 
The operator $S$ is closed, so that $\Delta_V := S^*S$ is a positive 
selfadjoint operator. We thus obtain 
the polar decomposition 
\[ S = J_V \Delta_V^{1/2},\] 
where $J_V$ is an antilinear involution and 
the modular relation $J_V\Delta_V J_V = \Delta_V^{-1}$ is satisfied 
(cf.\ \cite[Prop.~3.3]{Lo08}, \cite{NO16}). 

We write $\Mod(\cH)$ for the set of pairs $(\Delta, J)$, where 
$J$ is a {\it conjugation} (an antilinear bijective isometry) and 
$\Delta$ is a positive selfadjoint operator with $J\Delta J = \Delta^{-1}$. 
Then the map 
\begin{equation}
  \label{eq:bij1}
\Phi \: \Mod(\cH) \to \Stand(\cH), \quad 
\Phi(\Delta, J) = \Fix(J \Delta^{1/2}) 
\end{equation}
is a bijection (\cite[Prop.~3.2]{Lo08}). 

To see more geometric structure on $\Stand(\cH)$, we have to 
connect its elements to homomorphisms $\R^\times \to \AU(\cH)$. 
This is best done in the context of graded groups and their 
antiunitary representations. 

\begin{definition} \mlabel{def:grad-grp} 
(a) A {\it graded group} is a pair $(G,\eps_G)$ consisting 
of a group $G$ and a surjective homomorphism $\eps_G \: G \to \{\pm 1\}$. 
We write $G_1 := \ker \eps_G$ and $G_{-1} = G \setminus G_1$, so that 
\[ G = G_1 \dot\cup G_{-1} \quad \mbox{ and } \quad 
G_j G_k = G_{jk} \quad \mbox{ for } \quad j,k \in \{\pm 1\}.\] 
Often graded groups 
are specified as pairs $(G,G_1)$, where $G_1$ is a subgroup of index~$2$, 
so that we obtain a grading by $\eps_G(g) := 1$ for $g \in G_1$ and 
$\eps_G(g) :=-1$ for $g \in G \setminus G_1$.

If $G$ is a Lie group and $\eps_G$ is continuous, i.e., $G_1$ is an open 
subgroup, then $(G,\eps_G)$ is called a {\it graded Lie group}. 

If $G$ is a topological group with two connected components, then 
we obtain a canonical grading for which $G_1$ is the identity component. 
Concrete examples are $\R^\times, \GL_n(\R), \OO_n(\R)$ and the group 
$\AU(\cH)$ of unitary and antiunitary operators on a complex 
Hilbert space $\cH$, endowed with the strong operator topology. 

(b) A {\it morphism of graded groups} $\phi \: (G,\eps_G) \to (H,\eps_H)$ 
is a group homomorphism $\phi \: G \to H$ with $\eps_H \circ \phi = \eps_G$, 
i.e., $\phi(G_j) \subeq H_j$ for $j =1,-1$. We write $\Hom_{\rm gr}(G,H)$ for the set of 
(continuous) graded homomorphism between the (topological) 
graded groups $(G,\eps_G)$ and $(H,\eps_H)$. 

(c) For a complex Hilbert space $\cH$, the group $\AU(\cH)$ carries a natural 
grading defined by $\eps(U) = -1$ if $U$ is antiunitary and 
$\eps(U) = 1$ if $U$ is unitary. 
For a topological graded group $(G,\eps_G)$, an 
{\it antiunitary representation} $(U,\cH)$ 
is a continuous homomorphism $U \: G \to \AU(\cH)$ 
of graded groups, where $\AU(\cH)$ carries the strong operator topology.
\end{definition}

It is easy to see that 
\[ \Psi \: \Mod(\cH) \to \Hom_{\rm gr}(\R^\times, \AU(\cH)), \quad 
\Psi(\Delta, J)(e^t) := \Delta^{-it/2\pi}, \quad 
\Psi(\Delta, J)(-1) := J\] 
defines a bijection (\cite[Lemma~2.22]{NO17}). Combining this with $\Phi$ from 
\eqref{eq:bij1}, we obtain a bijection 
\begin{equation}
  \label{eq:calV}
 \cV := \Phi \circ \Psi^{-1} \: 
\Hom_{\rm gr}(\R^\times, \AU(\cH)) \to \Stand(\cH).
\end{equation}

\begin{theorem} We obtain the structure of a dilation space 
  \begin{itemize}
  \item on $\Hom_{\rm gr}(\R^\times, \AU(\cH))$ by 
\[ (\gamma \bullet_r \eta)(t) := \gamma(r) \eta(t) \gamma(r)^{-1}, \] 
  \item on $\Mod(\cH)$ by 
\[ (\Delta_1, J_1) \bullet_r 
 (\Delta_2, J_2) 
=\begin{cases} 
(J_1 \Delta_2^{-1} J_1, J_1 \bullet J_2) & \text{ for } r = -1 \\
(\Delta_1^{-it/2\pi} \Delta_2 \Delta_1^{it/2\pi} ,
\Delta_1^{-it/2\pi} J_2 \Delta_1^{it/2\pi})
& \text{ for } r = e^t,    
\end{cases}\]
\item and on $\Stand(\cH)$ by 
\[ V_1 \bullet_r V_2 =
\begin{cases}
  J_{V_1} J_{V_2} V_2 & \text{ for } \quad r = -1 \\ 
\Delta_{V_1}^{-it/2\pi} V_2 & \text{ for } \quad r = e^t.
\end{cases}\]
  \end{itemize}
The bijections 
$\Phi \: \Mod(\cH) \to \Stand(\cH)$ and 
$\Psi \: \Mod(\cH) \to \Hom_{\rm gr}(\R^\times, \AU(\cH))$ 
are isomorphisms of dilation spaces. 
\end{theorem}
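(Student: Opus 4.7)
The plan is to derive all three dilation space structures from the single canonical structure on $\Hom(\R^\times,G)$ of Example~\ref{ex:1.13}(a), so that the dilation axioms only need to be verified once. First I would observe that $\Hom_{\rm gr}(\R^\times,\AU(\cH))$ is a dilation subspace of $\Hom(\R^\times,\AU(\cH))$: since $\eps_{\AU}\:\AU(\cH)\to\{\pm1\}$ is a homomorphism, it is invariant under conjugation, so $\gamma\bullet_r\eta$ is again graded whenever $\gamma$ and $\eta$ are, and the axioms (D1)--(D3) are inherited from Example~\ref{ex:1.13}(a). This yields the first bullet of the theorem.

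Next I would transport via $\Psi$. Writing $\gamma_i:=\Psi(\Delta_i,J_i)$, I would compute $(\gamma_1\bullet_r\gamma_2)(-1)$ and $(\gamma_1\bullet_r\gamma_2)(e^s)$ and recognise the result as $\Psi$ of the pair claimed in the theorem. For $r=-1$ the $-1$-component equals $J_1 J_2 J_1=J_1\bullet J_2$, while the $e^s$-component is $J_1\Delta_2^{-is/(2\pi)}J_1$. Using that $J_1$ is antilinear, $J_1 i J_1=-i$, and applying functional calculus to $\log\Delta_2$ together with $J_1\log\Delta_2\,J_1=-\log(J_1\Delta_2^{-1}J_1)$, this rewrites as $(J_1\Delta_2^{-1}J_1)^{-is/(2\pi)}$, which is exactly $\Psi(J_1\Delta_2^{-1}J_1,\,J_1\bullet J_2)(e^s)$. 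For $r=e^t$ the unitary $U:=\Delta_1^{-it/(2\pi)}$ conjugates everything componentwise, giving the formula on $\Mod(\cH)$ directly.

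Finally I would transport through $\Phi$. The tool is that for any (linear) unitary $U$ and any standard subspace $V$, $UV$ is standard with $S_{UV}=US_V U^{-1}$; uniqueness of the polar decomposition then gives $(\Delta_{UV},J_{UV})=(U\Delta_V U^{-1},UJ_V U^{-1})$. For $r=e^t$, applying this with $U=\Delta_1^{-it/(2\pi)}$ yields $V_1\bullet_r V_2=\Delta_1^{-it/(2\pi)}V_2$. For $r=-1$, the product $U:=J_1 J_2$ of two antiunitaries is again linear unitary, and a short computation using $J_2\Delta_2^{1/2}J_2=\Delta_2^{-1/2}$ together with $J_1 B^{1/2}J_1=(J_1 B J_1)^{1/2}$ for positive self-adjoint $B$ identifies the polar decomposition of $US_{V_2}U^{-1}$ as $(J_1 J_2 J_1)(J_1\Delta_2^{-1}J_1)^{1/2}$. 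Thus $V_1\bullet_{-1}V_2=J_1 J_2 V_2$ with the stated modular data. That $\Phi$ and $\Psi$ are isomorphisms of dilation spaces is then built into the construction.

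The main technical obstacle I expect is the bookkeeping for antilinearity when pushing the conjugation $J_1(\cdot)J_1$ through functional calculus: the sign flip from $J_1 i J_1=-i$ is precisely what converts $\Delta_2^{-is/(2\pi)}$ into $(J_1\Delta_2^{-1}J_1)^{-is/(2\pi)}$ and accounts for the inverse appearing in the $r=-1$ formulas on $\Mod(\cH)$ and $\Stand(\cH)$.
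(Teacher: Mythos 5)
Your proposal is correct and follows essentially the same route as the paper: verify the axioms once on $\Hom_{\rm gr}(\R^\times,\AU(\cH))$ as a dilation subspace of $\Hom(\R^\times,\AU(\cH))$ from Example~\ref{ex:1.13}(a), then transport the structure through $\Psi$ and $\Phi$, with the antilinearity of $J_1$ accounting for the inverse in the $r=-1$ formulas. The only cosmetic difference is that for $\Phi$ at $r=-1$ the paper computes $\Fix\bigl((J_1J_2J_1)(J_1\Delta_2^{-1}J_1)^{1/2}\bigr)=J_1J_2V_2$ directly, while you identify the polar decomposition of $(J_1J_2)S_{V_2}(J_1J_2)^{-1}$; these are the same computation.
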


\begin{proof} First we observe that, for each $r \in \R^\times$ 
we obtain by $\bullet_r$ a  binary operations on the spaces 
$\Hom_{\rm gr}(\R^\times, \AU(\cH))$, 
$\Mod(\cH)$ and $\Stand(\cH)$, respectively. 
In particular, $\Hom_{\rm gr}(\R^\times, \AU(\cH))$ is a dilation subspace 
of $\Hom(\R^\times, \AU(\cH))$ (Example~\ref{ex:1.13}), 
hence a dilation space. 
It therefore remains to show that 
$\Phi$ and $\Psi$ are compatible with all binary 
operations $\bullet_r$ and this implies in particular that (D1-3) are satisfied 
on $\Mod(\cH)$ and $\Stand(\cH)$. 

First we consider $\Psi$. 
Let $\gamma_j = \Psi(\Delta_j, J_j)$ for $j =1,2$, $r \in \R^\times$, 
and \break $\gamma := \Psi((\Delta_1, J_1) \bullet_r (\Delta_2, J_2))$. 
For $r = -1$ we then have 
\[ \gamma(-1) = J_1 J_2 J_1 = \gamma_1(-1) \gamma_2(-1) \gamma_1(-1) 
= (\gamma_1 \bullet_{-1} \gamma_2)(-1), \] 
and, for $t \in \R$, we have 
\[ \gamma(e^t) 
= (J_1 \Delta_2^{-1} J_1)^{-it/2\pi}
= J_1 \Delta_2^{-it/2\pi} J_1 
= \gamma_1(-1) \gamma_2(e^t) \gamma_1(-1)
= (\gamma_1 \bullet \gamma_2)(e^t).\] 
For $r = e^t$, $t\in \R$, the pair 
$(\Delta_1, J_1) \bullet_r (\Delta_2, J_2)$ is obtained from 
$(\Delta_2, J_2)$ by conjugating with $\Delta_1^{-it/2\pi} = \gamma_1(e^t)$ 
and this immediately implies that 
$\gamma = \gamma_1 \bullet_r \gamma_2$. 
We conclude that $\Psi$ is an isomorphism of dilation spaces. 

Now we turn to $\Phi$. From 
\begin{eqnarray} 
  \label{eq:stand-ref}
 \Fix((J_1 J_2 J_1) (J_1 \Delta_2^{-1} J_1)^{1/2})
&=& \Fix(J_1 J_2 \Delta_2^{-1/2} J_1)\\
&=& J_1 \Fix(J_2 \Delta_2^{-1/2}) = J_1 V_2' = J_1 J_2 V_2 \notag 
\end{eqnarray} 
it follows that $\Phi$ is compatible with $\bullet_{-1}$. 
For $r = e^t$, $t \in \R$,  the pair \break 
$(\Delta_1, J_1) \bullet_r (\Delta_2, J_2)$ is obtained from 
$(\Delta_2, J_2)$ by conjugating with $\Delta_1^{-it/2\pi}$, so that 
\[ \Phi\big((\Delta_1, J_1) \bullet_r (\Delta_2, J_2)\big) 
= \Delta_1^{-it/2\pi} V_2. \] 
This completes the proof.
\end{proof}

Any antiunitary representation of a graded group $G$ leads to a 
map \break $\cV_U \: \Hom_{\rm gr}(\R^\times,G) \to \Stand(\cH)$, 
an observation due to Brunetti, Guido and Longo (\cite[Thm.~2.5]{BGL02}; 
see also \cite[Prop.~5.6]{NO17}). 
The naturality of this map immediately shows that it is a morphism 
of dilation spaces: 

\begin{cor} {\rm(The BGL (Brunetti--Guido--Longo) construction)} 
\mlabel{cor:bgl} 
Let $(U,\cH)$ be an antiunitary representation of 
the graded topological group $(G, \eps_G)$. Then 
\[ \cV_U := \cV \circ U_* \: \Hom_{\rm gr}(\R^\times, G) \to \Stand(\cH), \quad 
\gamma \mapsto  \cV(U \circ \gamma) \] 
is a morphism of dilation spaces. 
\end{cor}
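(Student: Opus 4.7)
The plan is to factor the claim through the two maps $U_*$ and $\cV$ separately. By the preceding theorem, $\cV$ is already known to be an isomorphism of dilation spaces, so the composition $\cV_U = \cV \circ U_*$ will be a morphism of dilation spaces as soon as $U_*$ is. Hence the whole task reduces to checking that push-forward along the graded continuous homomorphism $U \: G \to \AU(\cH)$ intertwines the dilation structures on $\Hom_{\rm gr}(\R^\times, G)$ and $\Hom_{\rm gr}(\R^\times, \AU(\cH))$ defined in Example~\ref{ex:1.13}(a).

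First I would check that $U_*$ is well-defined, i.e.\ that $U \circ \gamma$ is a graded continuous homomorphism for any $\gamma \in \Hom_{\rm gr}(\R^\times, G)$. Continuity is automatic since both $U$ and $\gamma$ are continuous, and $\eps_{\AU(\cH)} \circ U \circ \gamma = \eps_G \circ \gamma = \eps_{\R^\times}$ follows from the fact that $U$ is a morphism of graded groups. Thus $U_*$ maps $\Hom_{\rm gr}(\R^\times, G)$ into $\Hom_{\rm gr}(\R^\times, \AU(\cH))$, both viewed as dilation subspaces of the corresponding unconstrained hom spaces.

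Next I would verify the compatibility with $\bullet_r$ by a direct computation: for $\gamma, \eta \in \Hom_{\rm gr}(\R^\times, G)$, $r \in \R^\times$ and $s \in \R^\times$,
\[
U_*(\gamma \bullet_r \eta)(s) = U\bigl(\gamma(r)\eta(s)\gamma(r)^{-1}\bigr) = U(\gamma(r))\,U(\eta(s))\,U(\gamma(r))^{-1} = \bigl(U_*\gamma \bullet_r U_*\eta\bigr)(s),
\]
where the middle equality uses that $U$ is a group homomorphism. This shows $U_*$ is a morphism of dilation spaces.

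Finally, since morphisms of dilation spaces compose, $\cV_U = \cV \circ U_*$ is itself a morphism of dilation spaces. There is no real obstacle here: the content of the statement is naturality of the BGL map with respect to the dilation structure, and the hard work has been absorbed into the theorem identifying $\cV$ as an isomorphism of dilation spaces; everything else is formal checking of definitions.
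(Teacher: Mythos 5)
Your proposal is correct and follows essentially the same route as the paper: the paper likewise reduces the claim to the fact that $U_*$ is a morphism of dilation spaces (declaring this a trivial consequence of $U$ being a morphism of graded topological groups) and then composes with the dilation-space isomorphism $\cV = \Phi \circ \Psi^{-1}$. Your explicit verification of the $\bullet_r$-compatibility of $U_*$ simply spells out what the paper leaves implicit.
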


\begin{proof} Since $\Phi$ and $\Psi$ are isomorphisms of dilation spaces, 
it suffices to observe that 
\[ U_* \: \Hom_{\rm gr}(\R^\times, G) \to \Hom_{\rm gr}(\R^\times, \AU(\cH)), 
\quad \gamma \mapsto U \circ \gamma \] 
is a morphism of dilation spaces. But this is a trivial 
consequence of the fact that $U$ is a morphism of graded topological groups. 
\end{proof}

In addition to the dilation structure, the space 
$\Stand(\cH)$ carries a natural involution $\theta$: 

\begin{remark} (The canonical involution) \\
(a) On $\Hom_{\rm gr}(\R,\AU(\cH))$ the involution 
$\theta(\gamma) =  \gamma^\vee$, 
$\gamma^\vee(t) := \gamma(t^{-1})$, defines an isomorphism 
of reflection spaces which is compatible with the dilation space structure in the 
sense that 
\[ (\gamma \bullet_r \eta)^\vee = \gamma \bullet_r (\eta^\vee)
\quad \mbox{ for }\quad \gamma,\eta \in 
\Hom_{\rm gr}(\R,\AU(\cH))\] 
because 
\[ (\gamma \bullet_r \eta)^\vee(s) 
= \gamma(r) \eta(s^{-1}) \gamma(r)^{-1} 
= (\gamma \bullet_r \eta^\vee)(s).\] 
The corresponding automorphism is given on 
$\Stand(\cH)$ by $\theta(V) := V'$ and 
on $\Mod(\cH)$ by $\theta(\Delta, J) = (\Delta^{-1},J)$. 

The fixed points of $\theta$ correspond to 
\begin{itemize}
\item graded homomorphisms $\gamma \: \R^\times \to \AU(\cH)$ with 
$\R^\times_+ \subeq \ker\gamma$. 
\item elements $V \in \Stand(\cH)$ which are 
Lagrangian subspaces of the symplectic vector space $(\cH,\omega)$, 
where $\omega(v,w) = \Im \la v, w \ra$. 
As the symplectic orthogonal space 
$V' := V^{\bot_\omega}$ coincides with 
$i V^{\bot_\R}$, where $\bot_\R$ denotes the orthogonal space 
with respect to the real scalar product 
$\Re \la v,w \ra$, the Lagrangian condition 
$V = V'$ is equivalent to $V = i V^{\bot_\R}$, 
which is equivalent to $V \oplus iV = \cH$ being an orthogonal direct 
sum. 
\item pairs $(\Delta, J)$ of modular objects  with $\Delta = \1$. 
\end{itemize}

(b) The canonical embedding of the symmetric space 
$\Conj(\cH)$: Note that 
\[ \zeta \: \Conj(\cH) \to \Hom_{\rm gr}(\R^\times, \AU(\cH)), 
\quad 
\zeta(J)(-1) := J, \quad \zeta(J)(e^t) =  \1 \quad \mbox{ for } \quad 
t \in \R \] 
defines a morphism of reflection spaces whose range is the 
set $\Hom_{\rm gr}(\R^\times, \AU(\cH))^\theta$ of $\theta$-fixed points. 
We likewise obtain morphisms of reflection spaces 
\[ \Conj(\cH) \to \Mod(\cH), \quad J \mapsto (\1, J), 
\qquad 
\Conj(\cH) \to \Stand(\cH),  J \mapsto \Fix(J) = \cH^J.\] 
\end{remark}

\begin{remark} For $\cH = \C^n$, the space 
$\Stand(\C^n) \cong \GL_n(\C)/\GL_n(\R)$ carries a natural 
symmetric space structure corresponding to $\tau(g) = \oline g$ on $\GL_n(\C)$ and 
given by 
\[ g\GL_n(\R) \sharp h \GL_n(\R) := g \tau(g^{-1}h) \GL_n(\R),
\quad \mbox{ resp., }\quad 
 g \R^n \sharp h \R^n = g \oline{g}^{-1} \oline h \R^n.\] 
 
This reflection structure is different from the one defined above 
by $s_{V_1} V_2 = V_1 \bullet V_2 = J_1 J_2 V_2$. 
In fact, if $J_1= J_2$, then $J_1 J_2 V_2 = V_2$ shows that $V_2$ is a fixed point 
of $s_{V_1}$ and thus $V_1$ is not isolated in $\Fix(s_{V_1})$, whereas 
this is the case for the reflections defined by $\sharp$ 
(cf.~Example~\ref{ex:1.3}(b)). 
\end{remark}

\subsection{Loos normal form of $\Stand(\cH)$} 

The unitary group $\U(\cH)$ acts on the reflection space 
$(\Stand(\cH), \bullet)$ by automorphisms 
and the morphism of reflection spaces 
\[ q \:  \Stand(\cH) \to \Conj(\cH), \quad q(V) := J_V \]  
is equivariant with respect to the conjugation action on 
$\Conj(\cH)$ which is transitive. 

For the involutive automorphism $\tau(g) := JgJ$ 
of $G := \U(\cH)$ we have $G^\tau \cong \OO(\cH^J)$. 
For $g_1, g_2 \in G$ and $V_1, V_2 \in \Stand_J(\cH)$, we have
\[ g_1 V_1 \bullet g_2 V_2 
= J_{g_1 V_1} J_{g_2 V_2} g_2 V_2 
= (g_1 J g_1^{-1})(g_2 J g_2^{-1}) g_2 V_2 
= g_1 \tau(g_1^{-1}g_2) V_2.\] 

\begin{proposition} Let $J \in \Conj(\cH)$ and 
write 
\[ \Stand_J(\cH) := \{ V \in \Stand(\cH) \: J_V = J\} = q^{-1}(J)\] 
for the $q$-fiber of $J$. Then the map
\[  \cP \: \U(\cH) \times \Stand_J(\cH) \to \Stand(\cH), \quad 
(g,V) \mapsto gV \] 
is surjective and factors to a bijection 
\[  \oline\cP \: \U(\cH) \times_{\OO(\cH^J)} \Stand_J(\cH) \to \Stand(\cH), \quad 
[g,V] \mapsto gV. \] 
It is an $\U(\cH)$-equivariant isomorphism of reflection spaces 
if the space on the left carries 
the reflection space structure given by 
\[ [g_1, V_1] \bullet [g_2, V_2] = [g_1 \tau(g_1^{-1}g_2), V_2].\] 
In particular, $\Stand_J(\cH)$ is a trivial reflection 
subspace of $\Stand(\cH)$ on which the product is given by 
$V_1 \bullet V_2 = V_2.$ 
\end{proposition}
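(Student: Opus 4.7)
The plan is to recognize this proposition as an instance of the construction from Example~\ref{ex:1.3}(f), applied with $G = \U(\cH)$, the involution $\tau(g) = JgJ$ with fixed-point subgroup $G^\tau = \OO(\cH^J)$, and $F = \Stand_J(\cH)$ carrying the restriction of the tautological $\OO(\cH^J)$-action. Once we verify the three ingredients (surjectivity of $\cP$, identification of fibers with $\OO(\cH^J)$-orbits, and compatibility with $\bullet$), the equivariance is automatic from the formula in Example~\ref{ex:1.3}(f). The trivial reflection structure on $\Stand_J(\cH)$ is then a direct consequence of $J^2 = \1$.

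First I would record the key covariance identity: for any $g \in \U(\cH)$ and $V \in \Stand(\cH)$, one has $J_{gV} = g J_V g^{-1}$ and $\Delta_{gV} = g \Delta_V g^{-1}$, since the Tomita operator $S_{gV}$ equals $g S_V g^{-1}$ on $g(V + iV)$ and polar decomposition is equivariant under conjugation by unitaries. For surjectivity of $\cP$, I would use that the conjugation action of $\U(\cH)$ on $\Conj(\cH)$ is transitive, so given $V \in \Stand(\cH)$ one picks $g \in \U(\cH)$ with $g J g^{-1} = J_V$; then $V_0 := g^{-1} V$ satisfies $J_{V_0} = g^{-1} J_V g = J$, hence $V_0 \in \Stand_J(\cH)$ and $\cP(g, V_0) = V$.

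Next I would identify the fibers of $\cP$ with the $\OO(\cH^J)$-orbits used in Example~\ref{ex:1.3}(f). If $g_1 V_1 = g_2 V_2$ with $V_1, V_2 \in \Stand_J(\cH)$, then applying $J_\cdot$ and the covariance identity yields $g_1 J g_1^{-1} = g_2 J g_2^{-1}$, so $h := g_2^{-1} g_1$ commutes with $J$ and thus lies in $\OO(\cH^J)$. Moreover $V_2 = g_2^{-1} g_1 V_1 = h V_1$, so $(g_2, V_2) = (g_1 h^{-1}, h V_1)$, which is precisely the action $h.(g_1, V_1)$ from Example~\ref{ex:1.3}(f). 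The converse inclusion is immediate. Therefore $\cP$ factors to a bijection $\oline\cP$, which is $\U(\cH)$-equivariant by inspection of the left-translation action on both sides.

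Finally I would verify compatibility with $\bullet$ by a direct calculation: for $(g_1, V_1), (g_2, V_2) \in \U(\cH) \times \Stand_J(\cH)$, the covariance identity gives
\[
(g_1 V_1) \bullet (g_2 V_2) = J_{g_1 V_1} J_{g_2 V_2}\, g_2 V_2 = (g_1 J g_1^{-1})(g_2 J g_2^{-1}) g_2 V_2 = g_1 \tau(g_1^{-1} g_2)\, V_2,
\]
which matches $\oline\cP\bigl([g_1, V_1] \bullet [g_2, V_2]\bigr) = \oline\cP\bigl([g_1 \tau(g_1^{-1} g_2), V_2]\bigr)$. Specializing this formula to $g_1 = g_2 = e$ (so $\tau(g_1^{-1} g_2) = e$) yields $V_1 \bullet V_2 = V_2$ on $\Stand_J(\cH)$, establishing the triviality claim. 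There is no real obstacle here beyond bookkeeping; the single nontrivial input is the covariance $J_{gV} = g J_V g^{-1}$, which is what translates transitivity of $\U(\cH)$ on $\Conj(\cH)$ into both surjectivity of $\cP$ and the clean description of its fibers.
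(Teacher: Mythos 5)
Your proof is correct and follows essentially the same route as the paper: the paper likewise deduces surjectivity from the transitivity of $\U(\cH)$ on $\Conj(\cH)$, identifies the fibers via the stabilizer $\OO(\cH^J)$ of $J$, and performs the identical computation $g_1V_1 \bullet g_2V_2 = g_1\tau(g_1^{-1}g_2)V_2$ (which it records just before the proposition). Your write-up merely makes explicit the covariance $J_{gV} = gJ_Vg^{-1}$ and the matching with Example~\ref{ex:1.3}(f), which the paper leaves implicit.
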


We thus obtain a ``normal form'' of the reflection space 
$\Stand(\cH)$ similar to the one in Example~\ref{ex:1.3}(f). 

\begin{proof} The surjectivity of $\cP$ follows from  the transitivity of 
the action of $\U(\cH)$ on $\Conj(\cH)$, which in turn follows from 
the existence of an orthonormal basis of $\cH$ fixed pointwise by~$J$.
The second assertion 
follows from the fact that $\OO(\cH^J)$ is the stabilizer of $J\in \Conj(\cH)$. 
\end{proof}

The subset $\Stand_J(\cH)$ corresponds to the set of all 
positive selfadjoint operators $\Delta$ with 
$J\Delta J = \Delta^{-1}$. For $A := i\log \Delta$, this means that 
$JAJ = A$, so that $A$ corresponds to a skew-adjoint operator on 
the real Hilbert space $\cH^J$, hence, by the real version of Stone's Theorem to 
the infinitesimal generator of a continuous one-parameter group in $\OO(\cH^J)$. 
We thus obtain a bijection $\Stand_J(\cH) \to \Hom(\R, \OO(\cH^J))$.

\begin{remark} The dilations on $\Stand(\cH)$ corresponding to $V$ 
are implemented by $J_V$ and the unitary operators 
$(\Delta_V^{it})_{t \in \R}$. Since $\AU(\cH)$ 
acts on $\Conj(\cH)$, these one-parameter groups 
act naturally on $\Conj(\cH)$ but they do not give rise 
to a dilation space structure because they do commute with 
the stabilizer $\OO(\cH^J) \cong \U(\cH)_J$  of $J$. 
In addition, the dilation groups depend on the pair 
$(\Delta, J)$ and not only on $J$. 
\end{remark}

\subsection{Geodesics in $\Stand(\cH)$} 

In a reflection space, we have a canonical 
notion of a geodesics. Although we do not specify a topology 
on $\Stand(\cH)$, the space $\Conj(\cH)\subeq \AU(\cH)$ carries 
the strong operator topology, and this immediately provides a 
natural continuity requirement for geodesics in $\Stand(\cH)$.

\begin{proposition} \mlabel{prop:2.9}
{\rm({Geodesics in $\Stand(\cH)$})} 
Let $\gamma \: \R \to \Stand(\cH)$ be a geodesic 
with $\gamma(0) = V$ such that the corresponding geodesic 
$(J_{\gamma(t)})_{t \in \R}$ in $\Conj(\cH)$ is strongly continuous. 
Then there exists a strongly continuous unitary one-parameter group 
$(U_t)_{t \in \R}$ satisfying 
\begin{equation}
  \label{eq:invert}
J_V U_t J_V = U_{-t} \quad \mbox{ for } \quad t \in \R 
\end{equation}
such that 
\[ \gamma(t) = U_{t/2} V \quad \mbox{ for } \quad t \in \R.\] 
The $U_t$ are uniquely determined by the relation 
\begin{equation}
  \label{eq:u-rel}
J_{\gamma(t)} = U_{t/2} J U_{t/2}^{-1} = U_t J, \quad \mbox{ resp.} \quad 
U_t = J_{\gamma(t)} J.
\end{equation}
\end{proposition}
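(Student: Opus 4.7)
The plan is to reduce the statement to Oeh's theorem (Theorem~\ref{thm:oeh}) applied to the curve $q \circ \gamma$ viewed as a continuous geodesic in the topological group $\AU(\cH)$.

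First I would observe that $t \mapsto J_{\gamma(t)}$ is a strongly continuous geodesic in $\AU(\cH)$ starting at $J_V$, and that its range lies in $\Inv(\AU(\cH))$. This uses that $q \: \Stand(\cH) \to \Conj(\cH)$ is a morphism of reflection spaces (recorded at the start of Section~\ref{sec:2}) together with the fact that the inclusion $\Conj(\cH) \hookrightarrow \AU(\cH)$ is a morphism of reflection spaces as well: the reflection structure on the involutions of a group is $s_g(h) = ghg^{-1}$, which agrees with the natural structure on $\Conj(\cH)$ given by conjugation.

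Oeh's theorem then provides a strongly continuous one-parameter group $U \: \R \to \AU(\cH)$ satisfying
\[
J_{\gamma(t)} = U_t J_V \quad \text{and} \quad J_V U_t J_V = U_{-t}.
\]
Because $J_{\gamma(t)}$ and $J_V$ are both antiunitary, the product $U_t = J_{\gamma(t)} J_V$ is unitary, so $(U_t)_{t\in\R}$ is automatically the desired strongly continuous unitary one-parameter group; both the intertwining relation \eqref{eq:invert} and the formula \eqref{eq:u-rel} -- together with uniqueness -- are built in.

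To identify $\gamma$ itself, I would use that the reflection structure on $(\R,\bullet)$ is $n \bullet m = 2n - m$, which forces $\gamma(t) \bullet \gamma(0) = \gamma(2t)$. Unpacking this via the definition $V_1 \bullet V_2 = J_{V_1} J_{V_2} V_2$ of the product on $\Stand(\cH)$ gives
\[
\gamma(2t) = J_{\gamma(t)} J_V V = U_t J_V^2 V = U_t V,
\]
and reparametrising yields the stated formula $\gamma(s) = U_{s/2} V$. The only delicate step is the initial setup -- phrasing $q \circ \gamma$ as a continuous geodesic in $\AU(\cH)$ landing in the involutions, so that Oeh's theorem applies without modification; after that, the whole structural content of the proposition drops out from one application of the geodesic identity with $s = 0$.
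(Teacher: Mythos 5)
Your argument is correct and follows essentially the same route as the paper: push $\gamma$ forward via the reflection-space morphism $q$ to a strongly continuous geodesic in $\Conj(\cH)\subeq\Inv(\AU(\cH))$, invoke Theorem~\ref{thm:oeh} to write $J_{\gamma(t)}=U_tJ_V$ with $U_t=J_{\gamma(t)}J_V$ unitary and $J_VU_tJ_V=U_{-t}$, and then recover $\gamma$ from the single geodesic identity $\gamma(t/2)\bullet\gamma(0)=\gamma(t)$ unpacked as $J_{\gamma(t/2)}J_VV=U_{t/2}V$. The paper's proof is exactly this, phrased with $t/2$ instead of your reparametrisation of $\gamma(2t)=U_tV$.
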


\begin{proof} Consider the base point $e := V \in \Stand(\cH)$ 
and the morphism 
\[ q \: \Stand(\cH) \to \Conj(\cH), \quad W \mapsto J_W \] 
of reflection spaces. By assumption, 
$\oline\gamma :=q \circ \gamma \: \R \to \Conj(\cH)$ 
is a strongly continuous geodesic, hence of the form  
$\oline\gamma(t) = U_t J_V$ 
for some continuous unitary one-parameter group $(U_t)_{t \in \R}$ 
(Theorem~\ref{thm:oeh}). Since the range of $\oline\gamma$ consists of 
involutions in $\AU(\cH)$, \eqref{eq:invert} follows. 
Further, 
\[ \gamma(t) = \gamma(t/2)\bullet \gamma(0) =  
 J_{\gamma(t/2)} J_V V = \oline\gamma(t/2) J_V V 
= U_{t/2} V.\] 
The relation \eqref{eq:u-rel} follows immediately from \eqref{eq:invert}.
\end{proof}

\begin{definition} We call a geodesic $\gamma \: \R \to \Stand(\cH)$ with 
$\gamma(0) = V$ {\it dilation invariant} if it is invariant under the corresponding 
one-parameter group $(\Delta_V^{it})_{t \in \R}$ of modular 
automorphisms, i.e., there exists an $\alpha \in \R$, such that 
\[ \Delta_V^{is} \gamma(t) = \gamma(e^{\alpha s} t) \quad \mbox{ for } \quad 
s,t \in \R.\] 
\end{definition}

\begin{proposition} \mlabel{prop:2.12} 
Consider the non-constant 
geodesic $\gamma(t) = U_{t/2}V$ through~$V$ and 
the unitary one-parameter group 
$W_s := \Delta_V^{-is/2\pi}$ implementing the dilations in~$V$. 
If $\gamma$ is dilation invariant, then there exists an 
$\alpha \in \R$ with 
\[ W_s U_t W_{-s} = U_{e^{\alpha s}t} \quad \mbox{ for } \quad 
t,s \in \R. \] 
Then 
\begin{equation}
  \label{eq:ax+b}
 U_{(0,-1)} := J_V \quad \mbox{ and }  \quad 
U_{(b,e^s)} := U_b W_{s} 
\end{equation}
defines an antiunitary representation
 of the group $G_\alpha := \R \rtimes_\zeta \R^\times$ with 
$\zeta(-1)x = -x$ and $\zeta(e^t)x = e^{\alpha t}x$. 
Conversely, for every antiunitary representation $(U,\cH)$ of 
$G_\alpha$, the restriction to 
$\{0\} \times \R^\times$ specifies a standard subspace $V$ 
and $\gamma(t) := U_{(t/2,1)} V$ 
is a dilation invariant geodesic with $\gamma(0) = V$. 
\end{proposition}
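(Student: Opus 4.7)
My plan is to first derive the commutation relation $W_s U_t W_{-s} = U_{e^{\alpha s}t}$, then assemble the antiunitary representation from $U_t$, $W_s$, and $J_V$, and finally treat the converse by restriction. The workhorse throughout is the identity $J_{WV} = WJ_VW^{-1}$ for any unitary $W$ (immediate from polar decomposition of $S_{WV}$), together with the modular commutation $J_V \Delta_V^{it} = \Delta_V^{it} J_V$, which follows from $J_V\Delta_V J_V = \Delta_V^{-1}$ by antilinear spectral calculus (using $J e^{iA} J^{-1} = e^{-iJAJ^{-1}}$ for selfadjoint~$A$). Checking the sign in this last identity is the one mildly subtle point.

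For the first step, the dilation hypothesis $\Delta_V^{is}\gamma(t) = \gamma(e^{\alpha_0 s}t)$ translates via $J_{WV} = WJ_V W^{-1}$ into the operator equation $\Delta_V^{is} J_{\gamma(t)} \Delta_V^{-is} = J_{\gamma(e^{\alpha_0 s}t)}$. Substituting $J_{\gamma(t)} = U_t J_V$ from Proposition~\ref{prop:2.9} and cancelling $J_V$ on both sides using modular commutation yields $\Delta_V^{is} U_t \Delta_V^{-is} = U_{e^{\alpha_0 s}t}$, which in terms of $W_s = \Delta_V^{-is/2\pi}$ reads $W_s U_t W_{-s} = U_{e^{\alpha s}t}$ with $\alpha := -\alpha_0/(2\pi)$.

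For the second step, I verify the homomorphism property of \eqref{eq:ax+b} on a generating set of $G_\alpha = \R \rtimes_\zeta \R^\times$. The non-trivial relations to check reduce to $J_V^2 = \1$, $J_V U_t J_V = U_{-t}$ (from Proposition~\ref{prop:2.9}), $J_V W_s J_V = W_s$ (modular commutation), and $W_s U_t W_{-s} = U_{e^{\alpha s}t}$ (Step~1); all hold. Strong continuity is inherited from that of the pieces $U_\cdot$ and $W_\cdot$, and the grading is correct since $J_V$ is antiunitary while $U_t$ and $W_s$ are unitary, so \eqref{eq:ax+b} defines a graded antiunitary representation of $G_\alpha$.

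For the converse, restricting $(U, \cH)$ to $\{0\}\times \R^\times \cong \R^\times$ gives a graded homomorphism $\R^\times \to \AU(\cH)$ which under $\cV$ produces $V \in \Stand(\cH)$ with $J_V = U_{(0,-1)}$ and $\Delta_V^{-is/2\pi} = U_{(0,e^s)}$. Setting $\tilde U_t := U_{(t,1)}$, the group relation $(0,-1)(t,1)(0,-1) = (-t,1)$ gives $J_V \tilde U_t J_V = \tilde U_{-t}$, so Proposition~\ref{prop:2.9} yields the geodesic $\gamma(t) := \tilde U_{t/2}V$ through~$V$. Dilation invariance is then a direct computation using the semidirect product law and $\Delta_V^{is} V = V$:
\[ \Delta_V^{is}\gamma(t) = U_{(0,e^{-2\pi s})(t/2,1)}V = U_{(e^{-2\pi \alpha s}t/2,\, e^{-2\pi s})}V = U_{(e^{-2\pi \alpha s}t/2,\, 1)}V = \gamma(e^{-2\pi \alpha s}t). \]
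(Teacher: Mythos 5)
Your proposal is correct and follows essentially the same route as the paper: both derive $W_s U_t W_{-s} = U_{e^{\alpha s}t}$ by conjugating the identity $U_t = J_{\gamma(t)}J_V$ from Proposition~\ref{prop:2.9} with $W_s$, using the equivariance $J_{WV} = W J_V W^{-1}$ together with the commutation of $J_V$ with $\Delta_V^{it}$. The paper merely compresses the remaining verifications (the generator relations for $G_\alpha$ and the converse) into ``Therefore \eqref{eq:ax+b} defines an antiunitary representation \dots\ The converse is clear,'' which you have spelled out correctly, including the harmless reparametrization of $\alpha$.
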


Note that, for $\alpha \not=0$, the group $G_\alpha$ is isomorphic 
to $\Aff(\R)$ and otherwise to 
$\R^2 \rtimes_\sigma \{\pm 1\}$ with 
$\sigma(-1)(x,y) = (-x,y)$. 

\begin{proof} The dilation invariance of $\gamma$ 
implies the existence of an $\alpha \in \R$ with 
$W_s \gamma(t) = \gamma(e^{\alpha s}t)$ for $s,t \in \R$. 
For the corresponding unitary one-parameter group $U$, this leads to 
\[ W_s U_t W_{-s} 
= W_s J_{\gamma(t)} J_V W_{-s} 
= J_{\gamma(e^{\alpha s}t)} J_V = U_{e^{\alpha s}t} \quad \mbox{ for } \quad s,t \in \R.\] 
Therefore 
\eqref{eq:ax+b} defines an antiunitary representation of $G_\alpha$. 
The converse is clear.
\end{proof}

\section{The order on $\Stand(\cH)$} 
\mlabel{sec:3}

As a set of subsets of $\cH$, the space $\Stand(\cH)$ 
carries a natural order structure, defined by set inclusion. 
We shall see below that non-trivial inclusions 
$V_1 \subset V_2$ arise only if both modular operators 
$\Delta_{V_1}$ and $\Delta_{V_2}$ are unbounded. Therefore 
inclusions of standard subspaces appear only if 
$\cH$ is infinite dimensional. Here a natural question 
is to understand when a non-constant geodesic $\gamma \: \R \to \Stand(\cH)$ 
is monotone with respect to the natural order on $\R$. 
In general this seems to be hard to characterize, but for 
dilation invariant geodesics, Proposition~\ref{prop:2.12}  
can be combined with the Borchers--Wiesbrock Theorem 
(\cite[Thms.~3.13, 3.15]{NO17}) which provides a complete answer 
in the case in terms of the positive/negative 
energy condition on the corresponding antiunitary representation of~$\Aff(\R)$. 

\subsection{Monotone dilation invariant geodesics}

\begin{lemma} \mlabel{lem:3.2}
If $V_1 \subset V_2$ is a proper inclusion of standard subspaces, 
then both operators $\Delta_{V_j}$, $j =1,2$, are unbounded. 
\end{lemma}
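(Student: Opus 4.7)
The plan is to reduce the lemma to the characterization that $\Delta_V$ is bounded precisely when $V \in \Stand_0(\cH)$, and then to prove a maximality property: no $V \in \Stand_0(\cH)$ can be properly contained in a larger standard subspace.

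First I would establish the equivalence $\Delta_V$ bounded $\iff V + iV = \cH$. By definition, $S_V = J_V\Delta_V^{1/2}$ is a closed antilinear operator with domain $V + iV$. If $\Delta_V$ is bounded, so is $\Delta_V^{1/2}$, hence $S_V$ is closed and everywhere defined, forcing $V + iV = \cH$. Conversely, if $V + iV = \cH$, the (antilinear variant of the) closed graph theorem makes $S_V$ bounded, and then $\Delta_V^{1/2} = J_V S_V$ and $\Delta_V$ are bounded. The modular relation $J_V\Delta_V J_V = \Delta_V^{-1}$ then forces $\Delta_V^{-1}$ to be bounded as well, so $\Delta_V$ is bounded iff $V \in \Stand_0(\cH)$ iff $\Delta_V$ is invertible with bounded inverse.

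The next key step is the maximality claim: if $V_1 \in \Stand_0(\cH)$ and $V_1 \subeq V_2$ for some $V_2 \in \Stand(\cH)$, then $V_1 = V_2$. Given $v \in V_2$, I would use $V_1 + iV_1 = \cH$ to write $v = a + ib$ with $a, b \in V_1 \subeq V_2$; then $ib = v - a \in V_2$, and $b \in V_2$, so $b \in V_2 \cap iV_2 = \{0\}$, whence $v = a \in V_1$. This disposes of the case that $\Delta_{V_1}$ is bounded.

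For the symmetric case that $\Delta_{V_2}$ is bounded, I would pass to symplectic complements. The canonical involution $\theta(V) = V'$ on $\Stand(\cH)$ is order-reversing, and since $\Delta_{V'} = \Delta_V^{-1}$, the equivalence established in the first step together with the modular relation shows that $V \in \Stand_0(\cH)$ if and only if $V' \in \Stand_0(\cH)$. A proper inclusion $V_1 \subset V_2$ with $V_2 \in \Stand_0(\cH)$ then yields a proper inclusion $V_2' \subset V_1'$ with $V_2' \in \Stand_0(\cH)$, contradicting the maximality already proved. The only real technical ingredient is the equivalence of the first step; the remaining arguments are purely algebraic.
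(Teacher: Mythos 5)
Your proposal is correct and follows essentially the same route as the paper: boundedness of $\Delta_{V_1}$ forces $V_1+iV_1=\cH$ (since $S_1=J_{V_1}\Delta_{V_1}^{1/2}$ is then closed, bounded and densely defined), which rules out a proper extension because $V_2\cap iV_2=\{0\}$, and the case of bounded $\Delta_{V_2}$ is handled by passing to symplectic complements via $\Delta_{V'}=\Delta_V^{-1}$ and $V''=V$. The only additions beyond the paper's argument (the closed-graph converse and the explicit $\Stand_0(\cH)$ reformulation) are harmless.
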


\begin{proof} Suppose that $\Delta_{V_1}$ is bounded. 
Then $S_1 := J_{V_1} \Delta_{V_1}^{1/2}$ is also bounded and 
thus $\cH = \cD(S_1) = V_1 + i V_1$. As $V_1 \subeq V_2$ 
and $V_2 \cap i V_2 = \{0\}$, the inclusion cannot be strict. 

If $\Delta_{V_2} = \Delta_{V_2'}^{-1}$ is unbounded, 
this argument shows that the inclusion $V_2' \subeq V_1'$ 
cannot be strict, hence $V_2' = V_1'$ and thus 
$V_2 = V_2'' = V_1'' = V_1$. 
\end{proof}

From \cite[Prop.~3.10]{Lo08} we recall: 
\begin{lemma} \mlabel{lem:3.3} 
If $V_1 \subeq V_2$ for two standard subspaces 
and $V_1$ is invariant under the modular automorphisms 
$(\Delta_{V_2}^{it})_{t \in \R}$, then $V_1 = V_2$. 
\end{lemma}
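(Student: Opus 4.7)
The plan is to reduce the statement to Lemma~\ref{lem:3.2}: I will show that the hypotheses force the bounded conclusion $\Delta_{V_2} = \1$, after which the contrapositive of Lemma~\ref{lem:3.2} rules out any proper inclusion $V_1 \subsetneq V_2$ and the lemma is proved.

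For the reduction I would fix an arbitrary $v \in V_1$ and exploit two observations. First, since $v \in V_2$, the identity $S_{V_2} v = v$ gives $\Delta_{V_2}^{1/2} v = J_{V_2} v$. Second, the hypothesis on $V_1$ ensures $\Delta_{V_2}^{it} v \in V_1 \subseteq V_2$, so the same identity applied to $\Delta_{V_2}^{it} v$ yields
\[ \Delta_{V_2}^{1/2 + it} v = J_{V_2} \Delta_{V_2}^{it} v. \]
Rewriting the left-hand side as $\Delta_{V_2}^{it} \Delta_{V_2}^{1/2} v = \Delta_{V_2}^{it} J_{V_2} v$ and invoking the modular relation $J_{V_2} \Delta_{V_2}^{it} = \Delta_{V_2}^{-it} J_{V_2}$, I would obtain
\[ \Delta_{V_2}^{2it} J_{V_2} v = J_{V_2} v \quad \text{for all } t \in \R. \]
By the spectral theorem this means $J_{V_2} v \in \ker(\Delta_{V_2} - \1)$; applying $J_{V_2}$ and using $J_{V_2} \Delta_{V_2} J_{V_2} = \Delta_{V_2}^{-1}$ then transfers this to $v$ itself, so $V_1 \subseteq \ker(\Delta_{V_2} - \1)$.

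Since $\ker(\Delta_{V_2} - \1)$ is a complex-linear closed subspace, it contains $iV_1$ as well, hence the dense subspace $V_1 + iV_1$; it is therefore all of $\cH$, forcing $\Delta_{V_2} = \1$. Lemma~\ref{lem:3.2} then yields $V_1 = V_2$. I do not foresee a genuine obstacle here: the computation is a purely formal manipulation of the modular data. The only point that requires minor care is the legitimacy of the identity $\Delta_{V_2}^{1/2+it} v = \Delta_{V_2}^{it} J_{V_2} v$, which rests on the fact that the bounded operator $\Delta_{V_2}^{it}$ preserves $\cD(\Delta_{V_2}^{1/2}) = V_2 + i V_2$ and commutes with $\Delta_{V_2}^{1/2}$ there.
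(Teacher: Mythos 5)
Your argument breaks down at the modular commutation relation, and the error is fatal rather than cosmetic. Since $J_{V_2}$ is \emph{antilinear} and $J_{V_2}\Delta_{V_2}J_{V_2}=\Delta_{V_2}^{-1}$, one has $J_{V_2}\Delta_{V_2}^{z}J_{V_2}=\Delta_{V_2}^{-\bar z}$, so for $z=it$ this gives $J_{V_2}\Delta_{V_2}^{it}=\Delta_{V_2}^{it}J_{V_2}$: the conjugation \emph{commutes} with the modular unitaries instead of reversing the parameter as you assert. With the correct relation your chain of identities collapses to the tautology
\[ \Delta_{V_2}^{it}J_{V_2}v \;=\; \Delta_{V_2}^{1/2+it}v \;=\; J_{V_2}\Delta_{V_2}^{it}v \;=\; \Delta_{V_2}^{it}J_{V_2}v, \]
which carries no information, and the conclusion $J_{V_2}v\in\ker(\Delta_{V_2}-\1)$ evaporates. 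That something must be wrong can be seen without checking any signs: your hypotheses are satisfied with $V_1=V_2$, because every standard subspace is invariant under its own modular group (from $S_{V}\Delta_V^{it}=\Delta_V^{it}S_V$ one gets $\Delta_V^{it}V=V$). Your argument would therefore prove $\Delta_V=\1$ for \emph{every} standard subspace, which is false whenever $\Delta_V$ is nontrivial --- for instance for the standard subspaces attached to antiunitary positive energy representations of $\Aff(\R)$ in Section~\ref{sec:3}, where $\Delta_V$ is even unbounded.

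The statement is genuinely deeper than a formal manipulation of the modular data; the paper does not prove it but quotes it from \cite[Prop.~3.10]{Lo08}. The usual route is a uniqueness argument: the unitary one-parameter group $(\Delta_{V_2}^{it})_{t\in\R}$ preserves $V_1$ and satisfies the KMS-type condition characterizing the modular objects of a standard subspace, whence $(\Delta_{V_1},J_{V_1})=(\Delta_{V_2},J_{V_2})$ and therefore $V_1=\Fix(J_{V_1}\Delta_{V_1}^{1/2})=\Fix(J_{V_2}\Delta_{V_2}^{1/2})=V_2$. Your proposed reduction to Lemma~\ref{lem:3.2} cannot substitute for this, since the intermediate claim $\Delta_{V_2}=\1$ is unattainable.
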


\begin{theorem} \mlabel{thm:3.1}  
Let $\gamma(t) = U_{t/2}V$ 
be a non-constant dilation invariant geodesic with $U_t = e^{itH}$. 
Then $\gamma$ is decreasing if and only if 
$H \geq 0$ and $W_s = \Delta_V^{-is/2\pi}$ acts non-trivially on 
$\gamma(\R)$. 
\end{theorem}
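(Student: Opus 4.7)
The plan is to use Proposition~\ref{prop:2.12} to package the dilation invariance as an antiunitary representation of the group $G_\alpha = \R \rtimes_\zeta \R^\times$ for some $\alpha \in \R$, and then to split the argument according to whether $\alpha = 0$ or $\alpha \neq 0$, appealing to the Borchers--Wiesbrock theorem in the form \cite[Thms.~3.13, 3.15]{NO17} for the latter case.

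First I would translate the geometric conditions into representation-theoretic data. By Proposition~\ref{prop:2.12} there exists $\alpha \in \R$ with $W_s U_t W_{-s} = U_{e^{\alpha s}t}$, so that
\[
W_s \gamma(t) = W_s U_{t/2} V = U_{e^{\alpha s}t/2} W_s V = U_{e^{\alpha s}t/2} V = \gamma(e^{\alpha s}t),
\]
using $W_s V = V$. Hence $W_s$ acts non-trivially on $\gamma(\R)$ if and only if $\alpha \neq 0$. Moreover, ``$\gamma$ is decreasing'' translates, upon applying $U_{-s/2}$ on the left, into the single condition $U_r V \subseteq V$ for every $r \geq 0$.

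I would next dispose of the degenerate case $\alpha = 0$. Here $W_s$ commutes with every $U_r$, so $W_s U_r V = U_r W_s V = U_r V$; that is, each $\gamma(r)$ is invariant under the modular group of $V = \gamma(0)$. If $\gamma$ were non-constant and decreasing, there would exist $r_0 > 0$ with $U_{r_0} V \subsetneq V$, contradicting Lemma~\ref{lem:3.3}. This handles both directions in the case $\alpha = 0$: the forward direction, since decreasingness forces $\alpha \neq 0$; and the backward direction vacuously, since the hypothesis ``$W_s$ acts non-trivially'' fails precisely when $\alpha = 0$.

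The remaining case $\alpha \neq 0$ is the one where $G_\alpha \cong \Aff(\R)$ and one has a genuine antiunitary representation of the affine group on $\cH$. There the equivalence of $U_r V \subseteq V$ for all $r \geq 0$ with $H \geq 0$ is exactly the content of the Borchers--Wiesbrock theorem as recorded in \cite[Thms.~3.13, 3.15]{NO17}; combined with the translation in the first step, this yields the theorem. The main obstacle I anticipate is purely bookkeeping: one must reconcile the sign of $\alpha$ and the normalization $W_s = \Delta_V^{-is/2\pi}$ with the precise commutation relation $\Delta_V^{-it/2\pi} U_r \Delta_V^{it/2\pi} = U_{e^{\pm t}r}$ used in the Borchers--Wiesbrock formulation of \cite{NO17}. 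This amounts to a positive rescaling of the time parameter of $U$ and, if needed, a reflection $r \mapsto -r$, neither of which affects the final criterion $H \geq 0$ since only the sign of the generator enters the statement.
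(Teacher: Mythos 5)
Your proposal is correct and follows essentially the same route as the paper: a case split on whether $W_s$ acts non-trivially on $\gamma(\R)$ (i.e.\ $\alpha\neq 0$ in Proposition~\ref{prop:2.12}), with the trivial case excluded by Lemma~\ref{lem:3.3} and the non-trivial case reduced to the Borchers--Wiesbrock theorem for the resulting antiunitary representation of $\Aff(\R)$. The extra bookkeeping you spell out (reformulating decreasingness as $U_rV\subseteq V$ for $r\ge 0$ and tracking the normalization of $W_s$) is left implicit in the paper but is exactly what the cited results require.
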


\begin{proof} First we assume that $(W_s)_{s \in \R}$ acts non-trivially on $\gamma(\R)$, 
which means that $\alpha \not=0$ in Proposition~\ref{prop:2.12}, so that 
we obtain an antiunitary representation of $\Aff(\R)$. 
Now the assertion follows from \cite[Thm.~3.13]{NO17} 
or \cite[Thm.~3.17]{Lo08}.

If $\alpha = 0$, i.e., $(W_s)_{s \in \R}$ commutes with 
$(U_t)_{t \in \R}$, then each $\gamma(t)$ is invariant under $(W_s)_{s \in \R}$. 
Therefore $\gamma$ cannot be monotone by 
Lemma~\ref{lem:3.3}. 
\end{proof}

\begin{remark} Comparing Proposition~\ref{prop:2.12} with \cite[Thm.~3.22]{NO17}, 
it follows that two different standard subspaces $V_0, V_1 \in \Stand(\cH)$ 
lie on a dilation invariant geodesic of type $\alpha \not=0$ 
if and only if they have a +-modular intersection 
(see \cite[\S 3.5]{NO17} for details). 
\end{remark}

\begin{prob} Find a characterization of the monotone 
geodesics in $\Stand(\cH)$. 
By Lemma~\ref{lem:3.2} it is necessary that $\Delta_V$ is unbounded. 
For dilation invariant geodesics,  
Theorem~\ref{thm:3.1} provides a characterization 
in terms of the positive/negative spectrum condition on~$U$. 
In this case the representation theory of $\Aff(\R)$ 
even implies that, apart from the subspace of fixed 
points, the operator  $\Delta_V$ 
must be equivalent to the multiplication 
operator $(Mf)(x) = x f(x)$ on some space $L^2(\R^\times, \cK)$, 
where $\cK$ is a Hilbert space counting multiplicity 
(\cite[\S 2.4.1]{NO17}, \cite[Thm.~2,8]{Lo08}). 
Note that the fixed point space $\cH_0 := \ker(\Delta_V - \1)$ 
leads to an orthogonal decomposition $\cH = \cH_0 \oplus \cH_1$ and 
$V= V_0 \oplus V_1$ with $\Delta_V = \1 \oplus \Delta_{V_1}$ such that 
$\Delta_{V_1}$ has purely continuous spectrum.
\end{prob}

Since it seems quite difficult to address this problem directly, 
it is natural to consider subspaces of $\Stand(\cH)$ which 
are more accessible. Such subspaces can be obtained 
from an antiunitary representation $(U,\cH)$ of a graded 
Lie group $(G,\eps_G)$ and a fixed $\gamma \in \Hom_{\rm gr}(\R^\times, G)$ 
from the image $\cO_V := G_1.V$ of the conjugacy class 
$G_1.\gamma$ under the $G_1$-equivariant morphism 
$\cV_U \: \Hom_{\rm gr}(\R^\times,G) \to \Stand(\cH)$ 
of dilation spaces, where $V := \cV_U(\gamma)$ (Corollary~\ref{cor:bgl}). 
Then 
\[  S_V := \{ g \in G_1 \: U_gV \subeq V \} \] 
is a closed subsemigroup of $G_1$ with $G_{1,V} = S_V \cap S_V^{-1}$ 
and $S_V$ determines an order structure on $G_1/G_{1,V}$ by 
$g G_{1,V} \leq g' G_{1,V}$ if $g \in g'S_V$ (see \cite[\S 4]{HN93} for background material 
on semigroups and ordered homogeneous spaces) for which the inclusion 
$G_1/G_{1,V} \into \Stand(\cH)$ is an order embedding. 
Note that $\cO_V$ is a $G_1$-equivariant quotient of $G_1/G_{1,\gamma} 
\cong G_1.\gamma \subeq 
\Hom_{\rm gr}(\R^\times,G)$. In the following 
subsection we explain how these spaces and their order structure 
can be obtained quite explicitly 
for an important class of examples including the important case where
$\gamma$ is a Lorentz boost associated to a wedge domain in Minkowski space.

\subsection{Conformal groups of euclidean Jordan algebras} 

\begin{definition}
 A finite dimensional real vector space $E$ endowed 
with a symmetric bilinear map $E \times E \to E, 
(a,b) \mapsto a\cdot b$ is 
said to be a {\it Jordan algebra} if 
\[ x \cdot (x^2 \cdot y) = x^2 \cdot ( x \cdot y) \quad \mbox{ for } \quad 
x,y \in E.\] 
If $L(x)y = xy$ denotes the left multiplication, then 
$E$ is called {\it euclidean} if the trace form 
$(x,y) \mapsto \tr(L(xy))$ is positive definite. 
\end{definition} 

Every  finite dimensional euclidean Jordan algebra 
is a direct sum of simple ones and simple euclidean Jordan algebras 
permit a nice classification (\cite[\S V.3]{FK94}). They are of the following types: 
\begin{itemize}
\item $\Herm_n(\K)$, $\K = \R,\C,\H$, with $A\cdot B := \frac{AB + BA}{2}$. 
\item $\Herm_3(\bO)$, where $\bO$ denotes the $8$-dimensional 
alternative division algebra of octonions. 
\item $\Lambda_n := \R \times \R^{n-1}$ with 
$(t,v)(t',v') = (tt' + \la v,v' \ra, tv' + t'v)$. Then 
the trace form is a Lorentz form, so that we can think of $\Lambda_n$ 
as the $n$-dimensional Minkowski space, where the first 
component corresponds to the time coordinate. 
\end{itemize}

Let $E$ be a euclidean Jordan algebra. 
Then $C_+ := \{ v^2 \: v \in E\}$ is a pointed closed convex cone in $E$ 
whose interior is denoted $E_+$. The Jordan inversion $j_E(x) = x^{-1}$ 
acts by a rational map on $E$. The {\it causal group} 
$G_1 := \Cau(E)$ is the group of birational maps on $E$ generated by the 
linear automorphism group $\Aut(E_+)$ of the open cone $E_+$, 
the map $-j_E$ and the group of translations. It is an index two 
subgroup of the {\it conformal group} 
$G := \Conf(E)$ generated by 
the structure group $H := \Aut(E_+)\cup - \Aut(E_+)$ of $E$ 
(\cite[Prop.~VIII.2.8]{FK94}), $j_E$ and the translations. 
For any $g \in G$ and $x \in E$ in which $g(x)$ is defined, 
the differential $\dd g(x)$ is contained in $H$. This specifies a 
group grading $\eps_G \: G \to \{\pm 1\}$ for which 
$\ker \eps_G = G_1= \Cau(E)$ (see \cite[Thm.~2.3.1]{Be96} and \cite{Be00} 
for more details on causal groups). 
It also follows that an element of $G$ defines a linear map 
if and only if it belongs to the structure group $H$. 

The conformal completion $E_c$ of $E$ is a compact smooth manifold 
containing $E$ as an open dense submanifold on which $G$ acts 
transitively. By analytic extension, it 
can be identified with the Shilov boundary of the corresponding 
tube domain $E + i E_+ \subeq E_\C$ (\cite[Thm.~X.5.6]{FK94}, 
\cite[Thms.~2.3.1, 2.4.1]{Be96}).  
The Lie algebra $\g$ of $G$ has a natural $3$-grading 
\[ \g = \g_1 \oplus \g_0 \oplus \g_{-1}, \] 
where $\g_1 \cong E$ corresponds to the space of constant 
vector fields on $E$ (generating translations), 
$\g_0 = \fh$ is the Lie algebra of $H$ 
(the structure algebra of $E$) 
which corresponds to linear vector field, and $\g_{-1}$ corresponds to 
certain quadratic vector fields which are conjugate under the inversion
$j_E$ to constant ones (\cite[Prop.~X.5.9]{FK94}). 

We have a canonical homomorphism 
\begin{equation}
  \label{eq:gamma-scalar}
\gamma \:\R^\times \to H\subeq G, \quad r \mapsto 
r \id_E 
\end{equation} 
which is graded because $\gamma(-1) = -\id_E$ maps $E_+$ to $-E_+$. 
Note that $h = \gamma'(0) \in \g_0$ satisfies 
\begin{equation}
  \label{eq:liegrad}
 \g_j = \{ x \in \g \: [h,x] =j x\} \quad \mbox{ for } \quad j=-1,0,1
\end{equation}
and 
\begin{equation}
  \label{eq:liegrad2}
  \Ad_{\gamma(r)} x_j = r^j x_j \quad \mbox{ for } \quad r \in \R^\times, x_j \in \g_j.
\end{equation}
For the involution $\tau(g) := \gamma(-1) g \gamma(-1)$, resp., 
$\tau(g)(x) =- g(-x)$ for $x, g(-x) \in E \subeq E_c$, we then have 
$G_\gamma = H\subeq G^\tau$ and since by \eqref{eq:liegrad2}
 both groups have the same Lie algebra $\g_0$, 
the homogeneous space $M := G/H$ is symmetric (Example~\ref{ex:1.3}(b)). 

To determine the homogeneous space $G_1/G_{1,\gamma} \cong G_1.\gamma = G.\gamma \in 
\Hom_{\rm gr}(\R^\times,G)$, we first determine the stabilizer group~$G_\gamma$ 
and derive some information on related subgroups. 

\begin{lemma} \mlabel{lem:hgam} 
The following assertions hold: 
\begin{itemize}
\item[\rm(i)] $H = G_\gamma = \{ g \in G \: \Ad_g h = h\}$ 
and $G_{1,\gamma} = H_1 = \Aut(E_+)$. 
\item[\rm(ii)] $-j_E$ is contained in the identity component $G_0$ of $G_1$. 
\item[\rm(iii)] $G^\tau = H \rtimes  \{\1,j_E\} 
= H_1 \rtimes \{\pm \1, \pm j_E\}$ and $G_1^\tau = H_1 \rtimes \{ \1, - j_E\}$. 
\end{itemize}
\end{lemma}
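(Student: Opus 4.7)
For (i), I would establish the chain $H \subeq G_\gamma \subeq \{g \in G : \Ad_g h = h\} \subeq H$. The first inclusion is immediate: every element of $H$ is linear and hence commutes with each scalar multiplication $\gamma(r) = r\,\id_E$. The second follows by differentiating $g\gamma(e^t)g^{-1} = \gamma(e^t)$ at $t=0$. For the third, $\Ad_g h = h$ forces $\Ad_g$ to preserve the eigenspace decomposition \eqref{eq:liegrad}; in particular it preserves $\g_1 \cong E$ (the space of infinitesimal translations) and $\g_{-1}$, so $g$ normalizes both unipotent subgroups $\exp\g_{\pm 1}$, whose common normalizer in $G$ is the Levi factor $H$. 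The second assertion of (i) follows from the definition of $\eps_G$ via the differential: it sends $\Aut(E_+) \mapsto 1$ and $-\Aut(E_+) \mapsto -1$, so $G_{1,\gamma} = H \cap G_1 = \Aut(E_+) = H_1$.

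For (ii), I would exhibit a continuous path from the identity to $-j_E$ inside $G$. The Kantor--Koecher--Tits construction supplies an $\mathfrak{sl}_2$-triple $(h, e, e^*)$ in $\g$ with $e \in \g_1$ the constant vector field given by the Jordan unit and $e^* \in \g_{-1}$ the corresponding quadratic field $j_E$-conjugate to $e$. The compact one-parameter subgroup $\exp(\R(e - e^*))$ is modelled on the rotation subgroup of $\mathrm{SL}_2(\R)$, and a direct calculation on $E_c$ shows that $\exp\bigl(\tfrac{\pi}{2}(e - e^*)\bigr)$ acts as $-j_E$. Thus $-j_E$ lies in the identity component $G^\circ$, which coincides with $G_0 = G_1^\circ$ because $G_1$ is open in $G$.

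For (iii), the Lie-algebra level is immediate: by \eqref{eq:liegrad2}, $\tau_* = \Ad_{\gamma(-1)}$ acts as $(-1)^j$ on $\g_j$, so $\g^\tau = \g_0 = \fh$ and therefore $(G^\tau)^\circ \subeq H$; the reverse inclusion $H \subeq G^\tau$ is free from (i). A direct computation $\gamma(-1)j_E\gamma(-1)^{-1}(x) = -(-x)^{-1} = j_E(x)$ shows $j_E \in G^\tau$, and together with $j_E^2 = \id$ and the fact that $\Ad_{j_E}$ preserves $\g_0$ (exchanging $\g_{\pm 1}$) this yields $H \rtimes \{\1, j_E\} \subeq G^\tau$. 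For the reverse inclusion I would use the Bruhat-type decomposition $G = H N^+ \sqcup N^- H j_E N^+$ attached to the parabolic $P^+ = H \ltimes N^+$ with $N^\pm := \exp\g_{\pm 1}$: since $\tau$ acts as inversion on $N^+$ and $N^-$, uniqueness of the factorization forces the unipotent factors of any $\tau$-fixed element to vanish, leaving only $H$ or $Hj_E$. The formula for $G_1^\tau$ then follows by intersection with $G_1$: $\eps_G(j_E) = -1$ (from $dj_E(x) = -P(x^{-1}) \in -\Aut(E_+)$, with $P$ the Jordan quadratic representation) together with the centrality of $-\id_E$ in $H$ and the relation $(-\id_E)j_E = j_E(-\id_E)$ gives $Hj_E \cap G_1 = H_1(-j_E)$, hence $G_1^\tau = H_1 \rtimes \{\1, -j_E\}$.

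The main obstacle will be the upper bound $G^\tau \subeq H \cup Hj_E$ in (iii): the Lie-algebra calculation only controls the identity component $(G^\tau)^\circ$, and one must rule out further components by a global argument. I expect the Bruhat decomposition to do this cleanly, because $\tau$ inverts both unipotent radicals and $j_E$ represents the nontrivial element of the restricted Weyl group, so the $\tau$-fixed points in each Bruhat cell can be enumerated directly.
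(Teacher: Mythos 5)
Parts (i) and (ii) of your plan are sound, though they differ in detail from the paper's. For (i) the paper shows that $G_h$ fixes the two points $0$ and $\infty = j_E(0)$ of $E_c$ because these are the unique attracting fixed points of the flows $t \mapsto \exp(\mp th)$, and then quotes Bertram's Liouville-type theorem to conclude that $G_h$ acts by linear maps, hence lies in $H$; your normalizer argument for the two unipotent subgroups $\exp(\g_{\pm 1})$ leads to the same place, but you should justify why their common normalizer in the (disconnected) group $G$ is exactly $H$ -- the cleanest way is again via the common fixed points $0$ and $\infty$, at which point you have essentially rederived the paper's argument. For (ii) the paper exponentiates a central element of the maximal compact subalgebra of the automorphism group of the tube $T_{E_+}$, using the Cayley-transform realization as the unit ball; your explicit formula $\exp\bigl(\tfrac{\pi}{2}(e - e^*)\bigr) = -j_E$ is the same mechanism made concrete and is correct.

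The genuine gap is in (iii), exactly at the point you single out as the main obstacle. The decomposition $G = HN^+ \sqcup N^-Hj_EN^+$ (with $N^\pm := \exp(\g_{\pm 1})$) is not correct: since $N^-j_E = j_EN^+$ and $j_E$ normalizes $H$, your second piece collapses to the single coset $j_EHN^+$; and, more seriously, for a euclidean Jordan algebra of rank $r \geq 2$ the relevant double coset space has $r+1$ cells, indexed by the rank stratification of $E_c$ (for $E = \Herm_2(\R)$, $E_c$ is the Lagrangian Grassmannian of $\R^4$ and the stabilizer of a point has three orbits, not two). Your inversion argument does handle the open cell $N^-HN^+$, where uniqueness of the factorization forces the unipotent factors to be involutions in a torsion-free group, but the intermediate cells are simply not addressed, and enumerating their $\tau$-fixed points is not routine. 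The paper sidesteps all of this with a two-line argument that you should adopt: $G^\tau$ normalizes its identity component, so $\Ad_{G^\tau}$ preserves $\g^\tau = \g_0$ and hence its center $\fz(\g_0) = \R h$; since conjugation rescales $\Spec(\ad h) = \{-1,0,1\}$ by the eigenvalue $\lambda$ of $\Ad_g$ on $\R h$, necessarily $\Ad_g h = \pm h$, and then $+$ gives $g \in G_h = H$ by (i) while $-$ gives $g^{-1}j_E \in G_h = H$. With that replacement the rest of your (iii), including the computation $\eps_G(j_E) = -1$ and the passage to $G_1^\tau = H_1 \rtimes \{\1, -j_E\}$, goes through.
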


\begin{proof} (i) As $H \subeq G_\gamma\subeq G_h := \{ g \in G \: \Ad_g h = h\}$ 
is obvious, we 
have to show that every element $g \in G_h$ acts by a linear map 
on $E \subeq E_c$. Then the assertion follows from 
$\dd g(x) \in H$ for every $g \in G$ and $x \in E$. 

For every $v \in E \subeq E_c$ we have 
$\lim_{t \to \infty} \exp(-th).v= 0$, and this property determines the point $0$ 
as the unique attracting fixed point of the flow defined by 
$t \mapsto \exp(-th)$ on $E_c$. We conclude that $G_h$ fixes $0$. 
Likewise $\infty := j_E(0) \in E_c$ is the unique attracting fixed point 
of the flow defined by $t \mapsto \exp(th)$, and so 
$G_h$ fixes $\infty$ as well. This implies that $G_h$ acts on $E$ by 
affine maps fixing $0$, hence by linear maps (\cite[Thm.~2.1.4]{Be96}). 

(ii) Let $e \in E$ be the unit element of the Jordan algebra $E$. 
Then $-j_E(z) := - z^{-1}$ is the point reflection in the base point 
$ie$ of the hermitian symmetric space $T_{E_+} = E + i E_+$ 
with the holomorphic automorphism group 
$G_1 \cong \Aut(T_{E_+})$ (\cite[Thm.~X.5.6]{FK94}). 
Let $K := G_{1,ie}$ denote the stabilizer group of $ie$ in~$G_1$. Then 
$K$ is maximally compact in $G_1$ and its Lie algebra 
$\fk$ contains a central element $Z$ with $\exp(Z) = s$. This 
follows easily from the realization of $T_{E_+}$ as the unit 
ball $\cD \subeq E_\C$ of the spectral norm by the Cayley transform 
$p \: T_{E_+} \to \cD, p(z) := (z-ie)(z+ie)^{-1}$ which maps $ie$ to $0$ 
(\cite[p.~190]{FK94}). Now the connected circle group $\T$ acts on $\cD$ 
by scalar multiplications and the assertion follows. 

(iii) Since the Lie algebra of $G^\tau$ is $\g^\tau = \g_0$, the group 
$\Ad_{G^\tau}$ leaves $\fz(\g_0) = \R h$ invariant. 
If $g \in G^\tau$ and $\Ad_g h = \lambda h$, then 
$\Spec(\ad h) = \{ -1,0,1\}$ implies $\lambda \in \{ \pm 1\}$. 
If $\lambda = 1$, then $g \in G_h = H$, and if 
$\lambda = -1$, then we likewise obtain $g^{-1} j_E \in H$. 
\end{proof}

Let $\theta := \Ad_{-j_E} \in \Aut(\g)$ be the involution 
induced by the map $- j_E \in G_1 = \Cau(E)$ which is a Cartan 
involution of $\g$. It satisfies 
$\theta(h) = -h$ for the element 
$h = \gamma'(0)$ defining the grading 
and thus $\theta(\g_j) = \g_{-j}$ for $j =-1,0,1$. 
Therefore 
\[ C := C_+  + \theta(C_+)  \subeq \fq := \g_1 \oplus \g_{-1} \] 
is a proper $\Ad(H_1)$-invariant closed convex cone 
and $\Ad_h C = - C$ for $h \in H \setminus H_1$. 
By Lawson's Theorem (\cite[Thm.~7.34, Cor.~7.35]{HN93}, \cite{HO96}) 
$S_C := H_1 \exp(C)$ is a closed subsemigroup 
of $G_1$ which defines on $G_1/H_1$ a natural 
order structure by 
$g G_{1,V} \leq g' G_{1,V}$ if $g \in g'S_V$ 
which is invariant under the action of $G_1$ and 
reversed by elements $g \in G \setminus G_1$. 


\begin{theorem} \mlabel{thm:maxsem} {\rm(Koufany)} We have the equalities of semigroups 
\[ S_C = S_{E_+} := \{ g \in G_1 \:  g E_+ \subeq E_+ \} 
= \exp(C_+)  \Aut(E_+) \exp(\theta(C_+)) \] 
and in particular $S_C \cap S_C^{-1} = G_{E_+} = H_1 = \Aut(E_+)$. 
\end{theorem}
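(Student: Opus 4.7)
The plan reduces the theorem to the two equalities involving $S_{E_+}$: once $S_C = S_{E_+}$ is known, any $g \in S_{E_+} \cap S_{E_+}^{-1}$ satisfies $g E_+ = E_+$, hence $g \in \Aut(E_+) = H_1$, whereas $H_1 \subeq S_C \cap S_C^{-1}$ is obvious. So the core of the argument is the pair of mutual inclusions between $S_C$ (in its two descriptions) and $S_{E_+}$.

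\textbf{Forward inclusions.} Each of the three factors on the right lies in $S_{E_+}$: translation by $v \in C_+$ equals $\exp(X_v)$ with $X_v \in \g_1 \cong E$, and $E_+ + C_+ \subeq E_+$ gives $\exp(C_+) \subeq S_{E_+}$; the group $H_1 = \Aut(E_+)$ preserves $E_+$ by definition; and for $v \in C_+$ the one-parameter flow $\exp(t\theta(X_v))$ is obtained by conjugating the translation flow by $-j_E$, producing a quadratic ``inverted translation'' whose invariance of $E_+$ follows from the Jordan-algebraic identity $x \in E_+ \Rightarrow x^{-1} \in E_+$ combined with $E_+ + C_+ \subeq E_+$. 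Thus $\exp(C_+)\, H_1 \exp(\theta(C_+)) \subeq S_{E_+}$. To get $S_C = H_1 \exp(C) \subeq S_{E_+}$ as well, I would observe that by the above the Lie wedge of the closed subsemigroup $S_{E_+}$ at $\mathbf 1$ contains both $C_+$ and $\theta(C_+)$, hence their convex sum $C$; the standard infinitesimal invariance criterion for closed Lie semigroups then produces $\exp(C) \subeq S_{E_+}$.

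\textbf{Reverse inclusions.} Here I would invoke the maximality of $S_{E_+}$ in $G_1$ proved in the appendix (which asserts that $G_1 = S_{E_+} \cup S_{E_+}^{-1}$ and that no proper closed subsemigroup sits strictly between $S_{E_+}$ and $G_1$), combined with a Bruhat--Gauss decomposition in the open dense big cell $\Omega := \exp(\g_1)\, H\, \exp(\g_{-1})$ of $G$. Each $g \in S_{E_+} \cap \Omega$ admits a unique factorisation $g = \exp(X_a)\, h\, \exp(\theta(X_b))$ with $a,b \in E$, $h \in H$; translating the constraint $g E_+ \subeq E_+$ into conditions on $(a,h,b)$ by testing on the Jordan unit $e \in E_+$ and on the added point $\infty = j_E(0) \in E_c$ (and tracking the grading via $\gamma(r) = r \,\mathrm{id}_E$) pins down $a \in C_+$, $h \in H_1$, and $b \in C_+$. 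Closedness of the product set $\exp(C_+) H_1 \exp(\theta(C_+))$, which itself is a consequence of the $3$-grading together with Lawson's description $S_C = H_1 \exp(C)$, then propagates this from the dense set $S_{E_+} \cap \Omega$ to all of $S_{E_+}$, producing $S_{E_+} \subeq \exp(C_+) H_1 \exp(\theta(C_+)) \subeq S_C$ and closing the loop.

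The main obstacle is the Bruhat-factor analysis in the last step: the precise translation of the invariance condition $g E_+ \subeq E_+$ into the sign constraints $(a,b) \in C_+ \times C_+$ and $h \in H_1$ requires controlling where each individual factor sends $\partial E_+$ and the added point at infinity, and handling the elements of $S_{E_+}$ lying outside the open big cell depends essentially on the closedness of the target product set together with the maximality statement from the appendix to exclude any candidate semigroup strictly larger than $\exp(C_+) H_1 \exp(\theta(C_+))$.
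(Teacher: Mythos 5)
Your proposal attempts to reprove Koufany's theorem from scratch, whereas the paper's proof is essentially a citation: it invokes \cite[Thm.~4.9]{Ko95} for the equality $S_{E_+}\cap G_0 = \exp(C_+)\,(\Aut(E_+)\cap G_0)\,\exp(\theta(C_+)) = S_C\cap G_0$ on the identity component, and then extends to $G_1$ via $G_1 = G_0\Aut(E_+)$ (Lemma~\ref{lem:hgam} and \eqref{eq:g-g1}) together with $\Aut(E_+)\subseteq S_C\cap S_C^{-1}$. The substantive content you would have to supply is therefore exactly the reverse inclusion $S_{E_+}\cap G_0\subseteq \exp(C_+)\Aut(E_+)\exp(\theta(C_+))$, and here your sketch has a genuine gap: the claim that testing $gE_+\subseteq E_+$ on the Jordan unit $e$ and on $\infty$ ``pins down $a\in C_+$, $h\in H_1$, $b\in C_+$'' is not an argument --- evaluating at two points of $E_c$ cannot force membership of $a$ and $b$ in a cone, and one must in addition control where $\exp(\theta(X_b))$ is even defined on $\overline{E_+}$. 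This analysis is the actual content of Koufany's paper and cannot be waved through. (Also, your justification of the forward inclusion $\exp(\theta(C_+))\subseteq S_{E_+}$ does not work as written: conjugating the translation flow by $-j_E$ sends $E_+$ first to $-E_+$, and $-E_+ + C_+\not\subseteq -E_+$; the computation, consistent with the formula $X(z)=P(z,z)u$ used in the appendix, gives $\exp(\theta(X_v))(x)=(x^{-1}-v)^{-1}$, so a sign has to be tracked carefully against the conventions for $\theta$ and $C_+$ before this factor can be placed in $S_{E_+}$.)

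The second problem is that your appeal to the appendix is logically backwards. Maximality of $S_{E_+}$ (Theorem~\ref{thm:3.10c}) says that no proper subsemigroup of $G_1$ strictly contains $S_{E_+}$; it says nothing about subsemigroups contained in $S_{E_+}$, so it cannot upgrade the forward inclusion $S_C\subseteq S_{E_+}$ to an equality --- for that you would need maximality of $S_C$ itself, which is not what the appendix proves. (It is also not true that maximality means $G_1=S_{E_+}\cup S_{E_+}^{-1}$.) In the paper the logic runs the other way: Theorem~\ref{thm:maxsem} is obtained first from Koufany's result, and the maximality of $S_{E_+}$ is then used in Theorem~\ref{thm:3.10b} to squeeze $S_V$ between $S_{E_+}$ and $G_1$. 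Finally, deducing closedness of the product set $\exp(C_+)H_1\exp(\theta(C_+))$ from ``Lawson's description $S_C=H_1\exp(C)$'' presupposes the identity of the two product sets that you are trying to establish, so the density argument handling elements outside the big cell is circular as stated.
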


\begin{proof} By \cite[Thm.~4.9]{Ko95}, we have in the identity 
component $G_0$ of $G$ the equality 
\[ S_{E_+}\cap G_0  = \exp(C_+)  (\Aut(E_+)\cap G_0) \exp(\theta(C_+)) 
= (\Aut(E_+) \cap G_0)\exp(C) = S_C \cap G_0.\] 
The definition of $G_1 = \Cau(E)$ and Lemma~\ref{lem:hgam}(ii) imply that 
\begin{equation}
  \label{eq:g-g1}
G_1 = G_0 \Aut(E_+) \{\id, -j_E\} = G_0 \Aut(E_+). 
\end{equation}
This implies that 
\[ S_{E_+} = (S_{E_+}\cap G_0) \Aut(E_+) = \exp(C_+)  \Aut(E_+) \exp(\theta(C_+))\]  
and likewise, by Lemma~\ref{lem:hgam}(i),
\[ S_C = H_1 \exp(C) = \Aut(E_+)(\Aut(E_+) \cap G_0) \exp(C)
=  \Aut(E_+) (S_{E_+} \cap G_0) = S_{E_+}.\] 
\end{proof} 

In view of the fact that, in Quantum Field Theory 
standard subspaces are associated to domains in space-time, 
it is interesting to observe that the ordered space $(G_1/H_1, \leq)$ 
can be realized as a set of subsets of $E_c \cong G_1/P^-$, 
where $P^- = H_1  \exp(\g_{-1})$ is the stabilizer of 
$0 \in E\subeq E_c$ in~$G_1$ (\cite[Thm.~2.1.4(ii)]{Be96}). 

\begin{cor} \mlabel{cor:setreal} The map 
\[ \Xi \: G_1/H_1  \into 2^{E_c}, \quad 
g_1 H_1 \mapsto g_1 E_+ \] 
is an order embedding. 
\end{cor}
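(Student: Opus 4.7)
The plan is to unpack the three conditions for $\Xi$ to be an order embedding---well-definedness, order preservation in both directions, and injectivity---using nothing beyond Theorem~\ref{thm:maxsem} and the definitions of $S_{E_+}$ and of the order on $G_1/H_1$.

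First, I would verify well-definedness. By Lemma~\ref{lem:hgam}(i), $H_1 = \Aut(E_+)$ preserves the cone $E_+$ setwise, so for any $h \in H_1$ we have $g_1 h E_+ = g_1 E_+$, and the assignment $g_1 H_1 \mapsto g_1 E_+$ therefore depends only on the coset~$g_1 H_1$.

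Next, I would show that $\Xi$ simultaneously preserves and reflects the order. By the definition of the order on $G_1/H_1$ coming from the subsemigroup $S_{E_+}$, we have $g_1 H_1 \leq g_2 H_1$ iff $g_2^{-1} g_1 \in S_{E_+}$, and by the very definition of $S_{E_+}$ this is equivalent to $g_2^{-1} g_1 E_+ \subeq E_+$, i.e., to $g_1 E_+ \subeq g_2 E_+ = \Xi(g_2 H_1)$. Hence $\Xi$ is both order-preserving and order-reflecting.

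Finally, injectivity follows automatically: if $\Xi(g_1 H_1) = \Xi(g_2 H_1)$, the preceding equivalence applied in both directions yields $g_2^{-1} g_1 \in S_{E_+} \cap S_{E_+}^{-1}$, which equals $H_1$ by Theorem~\ref{thm:maxsem}. Thus $g_1 H_1 = g_2 H_1$. The entire corollary is a formal consequence of Theorem~\ref{thm:maxsem} together with the definitions, so no real obstacle arises; all the substance has already been absorbed into the identity $S_{E_+} \cap S_{E_+}^{-1} = H_1$.
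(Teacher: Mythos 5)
Your proposal is correct and follows essentially the same route as the paper: the paper's proof likewise reduces everything to the identity $S_C = S_{E_+}$ from Theorem~\ref{thm:maxsem} together with the definition of $S_{E_+}$ and the $G_1$-equivariance of $\Xi$, so that $g_1E_+\subeq g_2E_+$ is equivalent to $g_2^{-1}g_1\in S_C$, i.e., to $g_1H_1\leq g_2H_1$. Your explicit checks of well-definedness and of injectivity via $S_{E_+}\cap S_{E_+}^{-1}=H_1$ merely spell out details the paper leaves implicit.
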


\begin{proof} 
Because of the $G_1$-equivariance of $\Xi$, this follows from 
$S_C = S_{E_+}$, which implies that $g_1 E_+ \subeq g_2 E_+$ is equivalent 
to $g_2^{-1} g_1 \in S_C$, i.e., to $g_1 H_1 \leq g_2 H_1$ in $G_1/H_1$.
\end{proof}

\begin{remark}
For the case where $E = \R^{1,d-1}$ is $d$-dimensional Minkowski 
space, the preceding results lead to the set $\cW = G.E_+$ 
of {\it conformal wedge domains} in the conformal completion $E_c$. 
It contains in particular the standard right wedge 
\[ W_R = \{ (x_0, x_1, \ldots, x_{d-1}) \: x_1 > |x_0| \} \subeq E \]
and all its images under the Poincar\'e group  
(cf.~\cite[Exs.~5.15]{NO17}). 

For $E = \R$, we obtain in particular 
the set of open intervals in $E_c \cong \bS^1$. 
\end{remark}

Finally, we connect the ordered symmetric space $G_1/H_1$ 
to $\Stand(\cH)$ by using antiunitary positive 
energy representations. 

\begin{definition} We call an antiunitary representation 
$(U,\cH)$ of $(G, \eps_G)$ a {\it positive energy representation} 
if there exists a non-zero $x \in C_+ \subeq \g_1$ for which 
the selfadjoint operator $-i\dd U(x)$ has non-negative spectrum.
\end{definition}

\begin{remark} \mlabel{rem:posen}
(a) For every antiunitary representation of $G$, the set 
\[ W_U := \{x \in \g \: -i \dd U(x) \geq 0 \} \] 
is a closed convex invariant cone in $\g$ which is invariant 
under the adjoint action of $G_1$ and any 
$g \in G\setminus G_1$ satisfies $\Ad_g W_U = - W_U$. 

(b) Since the Lie algebra $\g$ is simple, it contains a pair 
$W_{\rm min} \subeq W_{\rm max}$ of non-zero closed convex invariant cones 
and any other proper invariant convex cone $W$ satisfies 
\[ W_{\rm min} \subeq W \subeq W_{\rm max} \quad \mbox{ or } \quad 
W_{\rm min} \subeq -W \subeq W_{\rm max} \] (\cite[Thm.~7.25]{HN93}). 
As $W_{\rm min} \cap \g_1 = W_{\rm max} \cap \g_1 \in \{\pm C_+\}$ 
by \cite[Prop.~II.7, Thm.~II.10, Prop.~III.7]{HNO94}, for an antiunitary representation 
$(U,\cH)$ of $G$ with ${W_U \not=\{0\}}$ either 
$U$ or its dual $U^*$ satisfies the positive energy condition. 

For a concrete classification of 
antiunitary positive energy representation we refer to \cite{NO17b}.
By \cite[Thm.~2.11]{NO17}, this classification can be reduced to the 
unitary highest weight representations of $G_1$, resp., its identity component, 
which have been determined by Enright, Howe and Wallach. 
We refer to the monograph \cite{Ne00} for a systematic exposition 
of this theory. 
\end{remark}

\begin{theorem} \mlabel{thm:3.10b} 
Let $(U,\cH)$ be an antiunitary positive energy 
representation of $G$ for which $\dd U$ is non-zero, so that 
$W_U$ is a non-zero proper invariant cone. 
Let $V := \cV_U(\gamma) \in \Stand(\cH)$ be the standard subspace 
corresponding to $\gamma$ under the BGL map {\rm(Corollary~\ref{cor:bgl})}. 
Then 
\[  S_C = S_V := \{ g \in G_1 \: U_gV \subeq V \} \] 
and this implies that the BGL-map $\cV_U \: G_1.\gamma \cong 
G_1/H_1 \to \cO_V = U_{G_1}.V \subeq \Stand(\cH)$ 
defines an isomorphism of ordered dilation  spaces. 
\end{theorem}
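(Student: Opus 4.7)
The plan is to prove the semigroup equality $S_V = S_C$; the isomorphism of ordered dilation spaces will then follow by descending the BGL orbit map to $G_1/H_1$. I proceed by two inclusions and a short quotient argument, with the reverse inclusion $S_V \subeq S_C$ being the main obstacle.

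For the inclusion $S_C \subeq S_V$, use the decomposition $S_C = H_1 \exp(C_+) \exp(\theta(C_+))$ from Theorem~\ref{thm:maxsem}. Since $H_1 = G_{1,\gamma}$ by Lemma~\ref{lem:hgam}(i) and $\cV_U$ is $G_1$-equivariant by Corollary~\ref{cor:bgl}, we have $U(H_1) V = V$; because $S_V$ is a subsemigroup, it then suffices to verify that $\exp(tx) \in S_V$ for $t \geq 0$ and $x \in C_+ \cup \theta(C_+)$. Fix $x \in C_+$. By \eqref{eq:liegrad2}, the subgroup of $G$ generated by $\gamma(\R^\times)$ and $\exp(\R x)$ is isomorphic to $\Aff(\R)$, and $U$ restricts to an antiunitary representation of it; under the BGL identification the modular group of $V$ is $U(\gamma(\R^\times_+))$ and the modular conjugation is $J_V = U(\gamma(-1))$. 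Positive energy together with Remark~\ref{rem:posen}(b) gives $C_+ \subeq W_U$, hence $-i \dd U(x) \geq 0$, so this restriction is a positive energy antiunitary representation of $\Aff(\R)$. The Borchers--Wiesbrock theorem \cite[Thm.~3.13]{NO17} (which underlies Theorem~\ref{thm:3.1} above) then yields $U(\exp(tx)) V \subeq V$ for $t \geq 0$. The case $x \in \theta(C_+) \subeq \g_{-1}$ is parallel: $-j_E \in G_1$ (Lemma~\ref{lem:hgam}(ii)) and the $\Ad_{G_1}$-invariance of $W_U$ give $\theta(C_+) \subeq W_U$, and the analogous argument using the $\Aff(\R)$-subgroup generated by $\gamma$ and $\exp(\R x)$ applies.

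For the inclusion $S_V \subeq S_C$, I invoke the maximality of $S_{E_+} = S_C$ among proper closed subsemigroups of $G_1$ proved in the appendix. Since $S_V$ is closed in $G_1$ and contains $S_C$ by the first step, it suffices to show $S_V \neq G_1$. Assume for contradiction that $U_g V = V$ for every $g \in G_1$. The hypothesis $\dd U \neq 0$ implies that $\dd U$ does not vanish on $C_+$: otherwise it would vanish on $\g_1$ (which $C_+$ spans), then on $\g_{-1} = \theta(\g_1)$ via conjugation by $U(-j_E)$, and then on $\g_0$ by the 3-graded structure and simplicity of $\g$, a contradiction. Pick $x \in C_+$ with $\dd U(x) \neq 0$ and set $U_t := U(\exp(tx))$. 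The assumption $U_t V = V$ is equivalent, by uniqueness of the polar decomposition of $S_V = J_V \Delta_V^{1/2}$, to $U_t$ commuting with both $\Delta_V$ and $J_V$. Differentiating the commutation $[U_t, \Delta_V^{is}] = 0$ at $s = t = 0$ and using $\Delta_V^{-is/2\pi} = U(\gamma(e^s))$ yields $[\dd U(x), \dd U(h)] = 0$, whence $\dd U(x) = \dd U([h, x]) = 0$ by \eqref{eq:liegrad}, contradicting the choice of $x$.

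Having established $S_V = S_C$, the BGL map descends to a bijection $G_1/H_1 \to \cO_V$, $gH_1 \mapsto U_g V$: surjectivity holds by the definition of $\cO_V$, and injectivity reduces to $G_{1,V} = H_1$, which follows from $G_{1,V} = S_V \cap S_V^{-1} = S_C \cap S_C^{-1} = H_1$ by Theorem~\ref{thm:maxsem}. The orders on source and target are defined by $S_C$ and $S_V$ respectively, so this bijection is an order isomorphism, and Corollary~\ref{cor:bgl} shows that it respects the dilation space structure, giving the claimed isomorphism. The main obstacle is the reverse inclusion: beyond relying on the appendix's maximality theorem, ruling out $S_V = G_1$ requires propagating $\dd U \neq 0$ through the 3-grading via simplicity of $\g$ and then extracting a contradiction from the modular-commutation consequence of $V$ being globally invariant.
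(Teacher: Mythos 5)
Your proof is correct and follows essentially the same route as the paper's: establish $W_U \cap \g_1 = C_+$ from the positive energy condition, use the $\aff(\R)$-subalgebras $\R h + \R x$ together with Borchers--Wiesbrock (Theorem~\ref{thm:3.1}) to obtain $S_C = \exp(C_+)H_1\exp(\theta(C_+)) \subeq S_V$, and invoke the maximality of $S_{E_+}$ (Theorem~\ref{thm:3.10c}) for the reverse inclusion. The only notable difference is that you rule out $S_V = G_1$ by an explicit commutation/differentiation argument, whereas the paper gets this implicitly from the equality $\{x \in \g_1 \: \exp(\R_+ x) \subeq S_V\} = C_+ \subsetneq \g_1$; both are fine.
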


\begin{proof}  By assumption $W_U$ is a proper closed convex invariant 
cone in $\g$ and in Remark~\ref{rem:posen} we have seen that 
$W_U \cap \g_1 \in \{ \pm C_+\}$, so that the positive energy 
condition leads to $W_U \cap \g_1 = C_+$. 
For $x \in C_+$ we have $[h,x] = x$ and 
$-i\dd U(x) \geq 0$, so that 
$\R x + \R h$ is a $2$-dimensional Lie algebra isomorphic to $\aff(\R)$. 
Therefore Theorem~\ref{thm:3.1} implies 
$\exp(\R_+ x) \subeq S_V$ and we even see that 
\begin{equation}
  \label{eq:c+}
\{ x \in \g_1 \: \exp(\R_+ x) \subeq S_V\} = W_U \cap \g_1 = C_+.
\end{equation}
As $\theta = \Ad_{-j_E}\in \Ad_{G_1}$ (Lemma~\ref{lem:hgam}), it leaves $W_U$ invariant. We conclude 
with Koufany's Theorem~\ref{thm:maxsem} that 
$S_{E_+} = S_C = \exp(C_+) H_1\exp(\theta(C_+)) \subeq S_V.$ 
Finally, we use the maximality of the subsemigroup 
$S_{E_+}\subeq G_1$ (Theorem~\ref{thm:3.10c}) to see that 
$S_C = S_V$. 
\end{proof}

\section{Open problems} 
\mlabel{sec:4}

\begin{prob} Let $(G,\eps_G)$ be a graded Lie group with 
two connected components, 
$\gamma \: \R^\times \to G$ be a graded smooth homomorphism 
and $(U,\cH)$ be an antiunitary representation of $G$. Then the 
$G_1$-invariant cone $W_U \subeq \g$ can be analyzed with the well-developed 
theory of invariant cones in Lie algebras (see~\cite[\S 7.2]{HN93} and also \cite{Ne00}). 
\begin{itemize}
\item 
Let $V := \cV_U(\gamma)$. 
Is it possible to determine when the order structure on the subset 
$U_{G_1}V= \cV_U(G_1.\gamma) \subeq \Stand(\cH)$ is non-trivial? 
Theorems~\ref{thm:3.1} and \ref{thm:3.10b} deal with very special cases. 
\item Is it possible to determine the corresponding order, which is given by 
the subsemigroup $S_V \subeq G_1$, intrinsically in terms of $\gamma$? 
Here the difficulty is that $G_1.\gamma \cong G_1/G_{1,\gamma}$ carries no 
obvious order structure. 
\end{itemize}
\end{prob}

\begin{prob} In several papers Wiesbrock develops 
a quite general program how to generate Quantum Field Theories, 
resp., von Neumann algebras of local observables 
from finitely many modular automorphism 
groups (\cite{Wi93, Wi93b, Wi97, Wi98}). 
This contains in particular criteria for three modular groups 
corresponding to three standard subspaces $(V_j)_{j =1,2,3}$ to generate 
groups isomorphic to the Poincar\'e group in dimension $2$ or to $\PSL_2(\R)$ 
(\cite[Thm.~3.19]{NO17}). On the level of von Neumann algebras 
there are also criteria for finitely many modular groups to 
define representations of $\SO_{1,3}(\R)^\uparrow$ or 
the connected Poincar\'e group $P(4)^\uparrow_+$ (\cite{KW01}). 

It would be interesting to see how these criteria can be expressed in 
terms of the geometry of finite dimensional totally geodesic dilation 
subspaces of $\Stand(\cH)$. 
\end{prob}

\appendix 

\section{Maximality of the compression semigroup of the cone} 

In this appendix we prove the maximality of 
the semigroup $S_{E_+}$ in the causal group $\Cau(E)$ 
of a simple euclidean Jordan algebra~$E$.

\begin{theorem} \mlabel{thm:3.10c} 
If $E$ is a simple euclidean Jordan algebra and $E_+ \subeq E$ the open 
positive cone, then the subsemigroup $S_{E_+}$ of $G_1 = \Cau(E)$ is maximal, 
i.e., any subsemigroup 
of $G_1$ properly containing $S_{E_+}$ coincides with $G_1$. 
\end{theorem}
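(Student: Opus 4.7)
Let $T \subseteq G_1$ be a subsemigroup with $S_{E_+} \subsetneq T$, and fix $g_0 \in T \setminus S_{E_+}$. Set $W := L(\oline{T}) = \{x \in \g : \exp(\R_{\geq 0} x) \subseteq \oline{T}\}$. Since $S_{E_+}$ is closed in $G_1$ and has group of units $H_1$ by Theorem~\ref{thm:maxsem}, the wedge $W$ contains $L(S_{E_+}) = \fh + C_+ + \theta(C_+)$, has edge $H(W) \supseteq \fh$, and is $\Ad_{H_1}$-invariant. The plan is to show $W = \g$ and then deduce $T = G_1$.

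Producing a new tangent direction $y \in W \setminus (\fh + C)$ is the subtle first step: the contracting dilation $\gamma(r) = r\,\id_E \in H_1$ from \eqref{eq:gamma-scalar} makes the conjugates $\gamma(r)^{-1} g_0 \gamma(r) \in T$ approach the identity as $r \to 0^+$, and since by Koufany's factorization $g_0 \notin \exp(C_+) H_1 \exp(\theta(C_+))$, the open Bruhat-type decomposition forces the resulting limit tangent vector to lie outside $\fh + C$. Once such a $y$ is in hand, adjust modulo $\fh \subseteq H(W)$ to take $y = y_1 + y_{-1} \in \fq$; $\Ad_{\gamma(e^s)}$-invariance yields $e^s y_1 + e^{-s} y_{-1} \in W$, and rescaling with $s \to \pm\infty$ isolates $y_{\pm 1} \in W \cap \g_{\pm 1}$. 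Assume $y_1 \in \g_1 \setminus C_+$ (the case $y_{-1} \in \g_{-1} \setminus \theta(C_+)$ is symmetric). Since $E = \g_1$ is a simple euclidean Jordan algebra and the spectral theorem (with quadratic representations $P(a)$ scaling eigenvalues along a Jordan frame independently) shows that $\oline{C_+}$ is, up to sign, the unique proper closed $\Aut(E_+)$-invariant convex cone in $E$, we deduce $W \cap \g_1 = \g_1$, so $\g_1 \subseteq H(W)$. The edge $H(W)$ then contains the maximal parabolic $\fp = \fh + \g_1$ of the simple Lie algebra $\g$, forcing $H(W) \in \{\fp, \g\}$. If $H(W) = \g$ then $W = \g$; otherwise $H(W) = \fp$ and repeating the contracting-limit and averaging procedure after conjugating by elements of $\exp(\g_1) \subseteq H(\oline{T})$ to pick up new $\g_{-1}$-directions gives $W \cap \g_{-1} = \g_{-1}$, and hence $W = \g$.

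With $W = \g$, the closure $\oline{T}$ contains $\exp(\g) \cup H_1$, which generates $G_1 = G_0 \cdot H_1$ (using $G_0 = \langle \exp \g \rangle$ from the proof of Theorem~\ref{thm:maxsem}), so $\oline{T} = G_1$. Moreover $S_{E_+}$ has non-empty interior in $G_1$ by Koufany's product decomposition (the Bruhat coordinate map is a local diffeomorphism at interior points of $C_+ \times H_1 \times \theta(C_+)$), hence so does $T$; for any $g \in G_1$, density of $T$ in $G_1$ produces $t \in T$ with $t = g u^{-1}$ for some $u \in \mathrm{int}\, T$, and then $g = t u \in T \cdot T \subseteq T$. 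The \textbf{main obstacle} is the first step---converting the set-theoretic strict containment $T \supsetneq S_{E_+}$ into the wedge enlargement $L(\oline{T}) \supsetneq L(S_{E_+})$---which requires both the Lie-semigroup character of $S_{E_+}$ (recoverability from its Lie wedge via Koufany's explicit formula) and a careful limit argument using the $\gamma$-flow; a similar delicate construction is needed in the $H(W) = \fp$ subcase of the saturation step, where producing $\g_{-1}$-directions beyond $\theta(C_+)$ exploits the new commutations available once $\exp(\g_1) \subseteq H(\oline{T})$.
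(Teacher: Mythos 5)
Your overall strategy---pass to the tangent wedge $W=L(\overline{T})$, use the grading element to split off $\g_{\pm1}$-components, invoke uniqueness of the invariant cone $\pm\overline{C_+}$ in the simple Jordan module $\g_1\cong E$, and finally recover $T=G_1$ from $W=\g$ via density plus nonempty interior---is a reasonable blueprint, and the last step (dense semigroup with interior points equals the group) is fine. But the proof has a genuine gap exactly where you flag it, and it is not a small one: you never establish that $g_0\in T\setminus S_{E_+}$ actually produces a tangent direction $y\in W\setminus(\fh+C)$. Worse, the mechanism you propose for this is incorrect as stated: conjugation by $\gamma(r)=r\,\id_E$ acts on $\g_j$ by $r^{j}$, so if $g_0$ has Bruhat factorization $\exp(x_1)h\exp(x_{-1})$, then $\gamma(r)^{-1}g_0\gamma(r)=\exp(r^{-1}x_1)\,h\,\exp(r x_{-1})$; the $\g_1$-component is \emph{expanded}, not contracted, as $r\to 0^+$, and the conjugates do not approach the identity in any direction of the $\R^\times_+$-flow unless $g_0$ already lies in a horospherical subgroup. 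So there is no "resulting limit tangent vector," and the assertion that it would lie outside $\fh+C$ is unsupported. The same unproved limit construction is needed again in your $H(W)=\fp$ saturation subcase. Since converting the set-theoretic inclusion $T\supsetneq S_{E_+}$ into a strict enlargement of Lie wedges is precisely the hard analytic core of any maximality theorem for compression semigroups, what remains is a proof outline rather than a proof.

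For comparison, the paper does not attempt this direct wedge analysis at all: it reduces the statement to the general maximality theorem for compression semigroups of Hilgert--Neeb \cite[Thm.~V.4]{HN95}, applied to $G_0$, the parabolic $P=G_{0,e}$ with $G_0/P\cong E_c$, and the involution $\tau=\Ad_{\gamma(-1)}$, identifying $S_{E_+}\cap G_0$ with the compression semigroup of the open orbit $H'P/P\cong E_+$. The actual work in the paper is (i) a covering argument to remove the hypothesis that $G$ sits in a simply connected complex group, (ii) verification of the regularity condition $\fz(\fh^a)\cap\fq\cap\fp\neq\{0\}$ via the element $e-\theta(e)$, and (iii) the reduction $G_1=G_0\Aut(E_+)$ from Lemma~\ref{lem:hgam}. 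The limit/contraction analysis you are missing is exactly what the cited theorem supplies; if you want a self-contained argument along your lines, you would essentially have to reprove that theorem, including a correct renormalized limit procedure for conjugates of $g_0$ under the $\R^\times_+$-flow.
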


\begin{proof} {\bf Step 1:} (Reduction to connected groups) 
First we recall from \eqref{eq:g-g1} that 
$G_1 = G_0 \Aut(E_+).$ 
As $\Aut(E_+) \subeq S_{E_+} \cap S_{E_+}^{-1}$, it therefore suffices to show that 
$S_{E_+}^0 := S_{E_+} \cap G_0$ is a maximal subsemigroup of 
the identity component $G_0$. \\

{\bf Step 2:} 
We want to derive the assertion from \cite[Thm.~V.4]{HN95}. 
In \cite{HN95} one considers a connected semisimple Lie 
group $G$, a parabolic subgroup $P$ and an involutive automorphism $\tau$ of $G$. 
In loc.~cit.\ it is assumed that the symmetric Lie algebra 
$(\g,\tau)$ is irreducible (there are no non-trivial $\tau$-invariant 
ideals) and that, for a $\tau$-invariant 
Cartan decomposition $\g = \fk \oplus \fp$ 
and the $\tau$-eigenspace decomposition $\g = \fh \oplus \fq$, 
the center of the Lie algebra 
\begin{equation}
  \label{eq:reg}
 \fh^a := (\fh \cap \fk) \oplus (\fq \cap \fp)
\quad \mbox{ satisfies}\quad 
\fz(\fh^a) \cap \fq \cap \fp \not=\{0\}.
\end{equation}
Then the conclusion of \cite[Thm.~V.4]{HN95} 
is that, if 
\begin{itemize}
\item $G^\tau P$ is open in $G$, 
\item the subsemigroup 
$S(G^\tau, P) := \{ g \in G \: gG^\tau P \subeq G^\tau P \}$ 
has non-empty interior, and 
\item $G = \la \exp_{G_\C} \g_\g \ra$ in the simply connected 
complex group $G_\C$ with Lie algebra~$\g_\C$, 
\end{itemize}
then $S(G^\tau, P)$ is maximal in $G$. 

We next explain how the assumption that $G_\C$ is simply 
connected can be weakened. 
Suppose that $G$ injects into its universal complexification 
$G_\C$ (which is always the case if it has a faithful finite dimensional 
representation). Let \break $q_\C \: \tilde G_{\C} \to G_\C$ denote the 
simply connected covering group and, 
as $G$ is connected, the integral subgroup 
$G^\sharp  := \la \exp_{\tilde G_\C} \g \ra \subeq \tilde G_\C$ 
satisfies $q_\C(G^\sharp) = G$ and $\ker q_\C$ is a finite central subgroup of 
$\tilde G_\C$. Consider the covering map $q := q_\C\res_{G^\sharp} \: G^\sharp \to G$. 
Then $P^\sharp := q^{-1}(P)$ 
is a parabolic subgroup of $G^\sharp$ and $G/P \cong G^\sharp/P^\sharp$. 
Let $\tau$ also denote the involution of $G^\sharp$ obtained by 
first extending $\tau$ from $G$ to a holomorphic involution of $G_\C$, 
then lifting it to $\tilde G_\C$ and then restricting to $G^\sharp$. 
Now $H' := q((G^\sharp)^{\tau}) \subeq G^\tau$ is an open subgroup 
satisfying $q((G^\sharp)^{\tau} P^\sharp) = H' P \subeq G^\tau P$. 
As $\ker(q) \subeq P^\sharp$ and $q$ is surjective, we even obtain 
\begin{equation}
  \label{eq:invrel}
(G^\sharp)^{\tau} P^\sharp = q^{-1}(H' P).
\end{equation}

By \cite[Thm.~V.4]{HN95}, 
$S_1 := S( (G^\sharp)^{\tau}, P^\sharp)$ is a maximal subsemigroup 
of $G^\sharp\subeq \tilde G_\C$. As $\ker q \subeq P^\sharp$, we have $S_1 = q^{-1}(S_2)$ 
for $S_2 := q(S_1)$. Further \eqref{eq:invrel} shows that 
\[ S_2 = \{ g \in G \: g H' P \subeq H' P \}.\] 
Now the maximality of $S_1$ in $G^\sharp$ 
immediately implies the maximality of $S_2$ in $G$. \\

{\bf Step 3:} (Application to causal groups of Jordan algebras) \\ 
First we verify the regularity condition~\eqref{eq:reg}. 
Here the Lie algebra $\g$ is simple, which implies 
in particular that $(\g,\tau)$ is irreducible. 

A natural Cartan involution of $\g$ is given by 
$\theta := \Ad_{-j_E}$ which satisfies $\theta(h) = -h$, hence 
$\theta(\g_j) = \g_{-j}$ for $j \in \{-1,0,1\}$. 
Then $\fh = \g^\tau = \g_0$ inherits the Cartan decomposition 
\[ \fh = \str(E) = \aut(E) \oplus L(E),\quad \mbox{ where } \quad 
L(x)y = xy \quad \mbox{ for } \quad x,y \in E,\] 
where $\aut(E)$ is the Lie algebra of the automorphism group 
$\Aut(E)$ of the Jordan algebra $E$, which coincides with 
the stabilizer group $H_e$ of the Jordan identity $e$ in~$H$. 
This shows that 
\[ \fh^a = \aut(E) \oplus \{ x-\theta(x) \: x \in \g_1 \}.\] 
Then the centralizer of $\aut(E)$ in $E\cong \g_1$ is $\R e$, and therefore 
\[ \{ x-\theta(x) \: x \in \g_1 \}^{\aut(E)} 
= \R (e - \theta(e)).\] 
To verify \eqref{eq:reg}, it remains to show that the element 
$e - \theta(e)$ is central in $\fh^a$, i.e., that it commutes with 
with all elements of the form $u - \theta(u)$, $u \in  \g_1$. 
As $\g_{\pm 1}$ are abelian subalgebras of $\g$, we have 
\[ [e-\theta(e), u - \theta(u)] 
= - [e,\theta(u)] - [\theta(e),u]
= -[e,\theta(u)] - \theta([e,\theta(u)]).\] 
So it suffices to show that $[e,\theta(u)] \in \fh^{-\theta}$. 
The vector field on $E$ corresponding to $\theta(u)$ is given by 
\[ X(z) = P(z,z)u \quad \mbox{ with }\quad 
P(x,y) = L(x)L(y)+ L(y)L(x)-L(xy)\] 
and therefore $[e,\theta(u)]$ corresponds to the linear vector field 
\[ E \to E, \quad z \mapsto \dd X(z)e = 2P(z,e)u = 2 L(z)u = 2 L(u) z.\] 
Since this is given by a Jordan multiplication, it belongs to 
$\fh^{-\theta}$ (\cite[Prop.~X.5.8]{FK94}). 
This proves that $(\g,\tau)$ satisfies the regularity 
condition~\eqref{eq:reg}. \\

{\bf Step 4:} (The maximality of $S_{E_+}$) 
Let $G_0$ denote the identity component of~$G$. 
Then the stabilizer $P := G_{0,e}$ of the Jordan identity 
$e \in E_+ \subeq E \subeq E_c$ is a parabolic 
subgroup and $G_0/P \cong E_c$ is a flag manifold of $G_0$. 

As above, let $\tau(g) := \gamma(-1)g\gamma(-1)$, resp., 
$\tau(g)(x) = -g(-x)$, as a birational map on~$E$,  
and observe that 
\begin{equation}
  \label{eq:h'}
(G_0)^\tau = (H \cap G_0)\{\1,-j_E\}
= (\Aut(E_+) \cap G_0)\{\1,-j_E\} 
\end{equation}
by Lemma~\ref{lem:hgam}. 

The subgroup $H' \subeq (G_0)^\tau$ from Step 2 consists 
of elements which are images of elements in the simply connected 
connected complex group $\tilde G_\C$ fixed under the involution $\tau$. 
The group $\tilde G_\C$ acts by birational maps on the complex Jordan algebra 
$E_\C$ and since $\tilde G_\C$ is simply connected, the subgroup 
$G_\C^\tau$ of $\tau$-fixed points in $G_\C$ is connected 
(\cite[Thm.~IV.3.4]{Lo69}). Therefore elements of $G_\C^\tau$ act on 
$E_\C$ by elements of the complex group $\exp_{G_\C}(\g_{0,\C})$, 
hence by linear maps. This shows that $H'$ acts on $E$ by linear maps 
and thus $H' \subeq \Aut(E_+)$ follows from \eqref{eq:h'}. 
Further, $H'$ contains $(G^\tau)_0 = \Aut(E_+)_0$ and thus 
$H'.e = E_+$, as a subset of $E_c$. This means that 
$H'P/P$ corresponds to $E_+$, and therefore the maximality 
of $S_{E_+} = S_2$ follows from Step~2. 
\end{proof}

\bibliographystyle{amsalpha} 

\end{document}